    \def\qed{\hfill$\sqcap\kern-8.0pt\hbox{$\sqcup$}$\\}
    \def\beq{\begin{eqnarray}}
    \def\eeq{\end{eqnarray}}
    \def\beqq{\begin{eqnarray*}}
    \def\eeqq{\end{eqnarray*}}
    \def\zz{{\mathbb Z}}
    \def\re{\textnormal {Re}}
    \def\im{\textnormal {Im}}
    \def\e{{\mathbb E}}
    \def\r{{\mathbb R}}
    \def\d{{\textnormal d}}
    \def\i{{\textnormal i}}
    \def\vv{{\textnormal v}}
\newtheorem{theorem}{Theorem}[section]
\newtheorem{lemma}[theorem]{Lemma}
\newtheorem{proposition}[theorem]{Proposition}
\newtheorem{corollary}[theorem]{Corollary}
\theoremstyle{definition}
\newtheorem{definition}{Definition}[section]
\newtheorem{assumption}{Assumption}[section]
\newtheorem{remark}[theorem]{Remark}
\numberwithin{equation}{section}
\newcommand{\la}{\lambda}
\newcommand{\eps}{\varepsilon}
  \newcommand{\hide}[1]{}
\def\vv#1{{\boldsymbol #1}}
\newcommand{\RR}{\r}
\newcommand{\ZZ}{\zz}
\newcommand{\mJ}{\vv {J}}
\newcommand{\mX}{\vv {X}}
\newcommand{\mY}{\vv {Y}}
\newcommand{\E}{\e}
\def\V{{\rm Var}}
\def\Var{{\rm Var}}
\def\topp#1{^{(#1)}}
\newcommand{\TT}{\mathbb{T}}
\newcommand{\T}{\mathbf{T}}
\newcommand{\C}{  \color{blue} \mathsf c  \color{black}}
\newcommand{\A}{  \color{blue} \mathsf a \color{black}}
\newcommand{\B}{ \color{blue} \mathsf b \color{black}}
\title{On the continuous dual Hahn process}
\author{W{\l}odek Bryc}
\address
{
W\l odzimierz Bryc\\
Department of Mathematical Sciences\\
University of Cincinnati\\
2815 Commons Way\\
Cincinnati, OH, 45221-0025, USA.
}
\email{wlodek.bryc@gmail.com}
\keywords{continuous dual Hahn polynomials;orthogonal martingale polynomials;quadratic harness}
\subjclass[2020]{60J25;33C45}
\newcounter{oldeq}
\newcounter{usesofarxiv}
 \newcommand{\arxiv}[1]{
\setcounter{oldeq}{\value{equation}}
 \addtocounter{usesofarxiv}{1}
 \setcounter{equation}{0}
\def\theoldeq{\theequation}
\def\theequation{x-\arabic{usesofarxiv}.\arabic{equation}}
\def\theequation{\arabic{section}.\arabic{usesofarxiv}.\arabic{equation}}
\def\theequation{\thesection.\arabic{usesofarxiv}.\arabic{equation}}
  \colorlet{shadecolor}{gray!10}
{\footnotesize
\begin{shaded}#1
\end{shaded}
   \setcounter{equation}{\value{oldeq}}
\numberwithin{equation}{section}
}\color{black}}
\begin{document}

\maketitle

\begin{abstract}
We extend  the continuous dual Hahn process $(\TT_t)$ of Corwin and Knizel  from a finite time interval   to the entire real line by taking  a limit of a closely related Markov process $(\T_t)$.
We also  characterize    processes $(\T_{t})$ by  conditional means and  variances under bidirectional conditioning, and we prove that continuous dual Hahn polynomials are orthogonal martingale polynomials for  both processes.

\end{abstract}

\arxiv{This is an expanded version of the paper with additional material.}
\section{Introduction}
In this paper we are interested in a family of Markov transition probabilities on the real line which are constructed from the orthogonality measures of the continuous dual Hahn polynomials.
Together with the appropriate marginal laws that arise from a point mass as the initial law, these transition probabilities define a class of continuous time Markov processes $(\T_t)$  which  appeared in the construction of quadratic harnesses in \cite[Section 3]{Bryc:2009}. Together  with an appropriate family of $\sigma$-finite positive measures as the entrance laws, see \eqref{ps-invar},  these transition probabilities appeared in the description of the multipoint Laplace transform for  stationary measures of the open KPZ equation in \cite[Theorem 1.4(5)]{CorwinKnizel2021}.
 Following \cite{CorwinKnizel2021},  we shall use the name the continuous dual Hahn process, and we will use their suggestive notation  $(\TT_t)$.
Our goal is to extend the time domain   of the process $(\TT_t)$ from a finite interval   described in \cite[(1.10)]{CorwinKnizel2021}, to the real line. %
The need for an extension  of the time domain arose in \cite[Theorem 1.3]{Bryc-Kuznetsov-Wang-Wesolowski-2021}, although for the purposes of that paper the extension to $t\in [0,\infty)$  would suffice.
We accomplish our goal by analyzing the  Markov process   $(\T_t)$ as one of its parameters diverges to $\infty$.

The actual process $(\TT_t)$ constructed here differs slightly from  the continuous dual Hahn  process in \cite{CorwinKnizel2021}: the process that appears in Refs \cite{Bryc-Kuznetsov-Wang-Wesolowski-2021,CorwinKnizel2021}    corresponds to $(4\TT_{s/2})$. On the other hand, $(\T_t)$   as constructed in this note, is  a direct extension of the family of Markov processes from \cite{Bryc:2009}   to  a half-line as the time domain. %
We will obtain the entrance laws for the process $(\TT_t)$  by taking  a limit   of    the appropriately scaled marginal laws for the process $(\T_t)$.

Our approach to the construction, which relies on verification of the Chapman-Kolmogorov equations, is somewhat different than in Refs. \cite{Bryc:2009,CorwinKnizel2021}, which used   explicit formulas for the orthogonality measures of the continuous dual Hahn polynomials. In the presence of atoms, such  explicit formulas lead to proliferation of cases, which we avoid by relying on   properties of the orthogonal martingale polynomials for   $(\T_t)$. In particular, as in \cite[Section 3.2]{Bryc-Wesolowski-08}, we deduce   the Chapman-Kolmogorov equations from the algebraic relations between two families of orthogonal polynomials.

 The paper is organized as follows. In Section \ref{Sec:CDH-poly} we recall the  definition of the continuous dual Hahn polynomials and discuss the probability measures  which make them orthogonal. In Section \ref{Sec:Trans} we use these measures to construct the family of transition probabilities and marginal laws for Markov process $(\T_t)$. Our main result, Theorem \ref{Thm:Chapman},  establishes the Chapman-Kolmogorov equations. In Section \ref{Sec:CDH-inf} we
introduce the $\sigma$-finite entrance laws that define  process $(\TT_t)$ for all $t\in(-\infty,\infty)$.
 In Section \ref{Sec:QH} we characterize the Markov process $(\T_t)$ by the formulas for the conditional mean and the conditional variance.

\section{Continuous dual Hahn polynomials}\label{Sec:CDH-poly}
\subsection{Favard's Theorem}
We first recall a version of  Favard's theorem in the form that encompasses  in one statement orthogonality with respect to both finitely supported and   infinitely supported measures.
This form of Favard's theorem is "well known" to the experts and it is implicit in many proofs,  in particular in the argument presented  in \cite[Section 2.5]{Ismail-book}. The explicit reference  (with a proof) is
  \cite[Theorem A.1]{Bryc-Wesolowski-08}.
\begin{theorem}[Favard's Theorem]\label{Thm-Favard} Let $\alpha_n,\beta_n$ be real, $n\geq 0$. Consider monic
polynomials
 $\{p_n\}$ defined by the recurrence %
  \begin{equation}\label{FVD-rec-m}
x {p}_n(x)=  p_{n+1}(x)+\alpha_n  p_n(x)+\beta_n  p_{n-1}(x),\; n\geq 0,
\end{equation}
with  the initial conditions $p_0(x)=1$,  $p_{-1}(x)=0$. Then the following two conditions are equivalent:
 \begin{enumerate}[(i)]
 \item For all $n\geq 1$,
\begin{equation}
  \label{Favard_condition}
  \prod_{j=1}^n \beta_{j}\geq 0 .
\end{equation}
\item
There exists a (not necessarily unique) probability measure $\nu$ with  all moments such that   for all $m,n\geq 0$,
\begin{equation}\label{FVD-ortho}
\int    p_n(x)   p_m(x)\nu(dx)=\delta_{m,n}\prod_{j=1}^{n} \beta_{j}.
\end{equation}
\end{enumerate}
Furthermore,   suppose that \eqref{Favard_condition} holds. Then either $\beta_n>0$  for all $n\geq 1$, and then measure $\nu$ has infinite support, or there is a positive integer $n\geq 1$ such that  $\beta_n=0$. In the latter case, denote by  $N$  the first positive integer such that $\beta_{N}=0$.
Then condition \eqref{Favard_condition}  contains no further  restrictions on the values $\beta_n$ for $n>N$ and the orthogonality measure
$\nu(dx)$ is a (unique) discrete probability measure supported on the finite set of
$N\geq 1$ real and distinct zeros of the polynomial
$ p_{N}(x)$.
\end{theorem}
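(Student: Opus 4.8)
The plan is to prove the two implications of the equivalence separately, and then analyze the finite-support case.

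\textbf{From (ii) to (i).} Suppose a probability measure $\nu$ with all moments exists satisfying \eqref{FVD-ortho}. Taking $m=n$ in \eqref{FVD-ortho} gives $\int p_n(x)^2\,\nu(dx)=\prod_{j=1}^n\beta_j$. Since the left-hand side is an integral of a nonnegative function against a probability measure, it is $\geq 0$, which is exactly \eqref{Favard_condition}. This direction is immediate and needs no work beyond this observation.

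\textbf{From (i) to (ii).} This is the substantive direction. I would treat two cases according to whether all $\beta_n$ are strictly positive or some $\beta_n$ vanishes. If $\beta_n>0$ for all $n\geq 1$, this is the classical Favard theorem: define a linear functional $\ll$ on polynomials by $\ll(1)=1$ and extend using the recurrence \eqref{FVD-rec-m} to force $\ll(p_np_m)=\delta_{m,n}\prod_{j=1}^n\beta_j$; one checks by induction on $\deg$ that $\ll$ is well defined and positive-definite on the span of $\{p_n\}$ (equivalently on all polynomials, since the $p_n$ form a basis), and then invokes the Hamburger moment problem to realize $\ll$ as integration against a probability measure $\nu$ with all moments (uniqueness may fail, hence the parenthetical remark in the statement). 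If instead some $\beta_n=0$, let $N\geq 1$ be the least index with $\beta_N=0$; then $\beta_1,\dots,\beta_{N-1}>0$. Here the idea is to build the measure explicitly as a discrete measure supported on the zeros of $p_N$. One shows first that $p_N$ has $N$ real, distinct, simple zeros $x_1,\dots,x_N$: this follows from the fact that $p_0,\dots,p_{N-1}$ satisfy a three-term recurrence with positive $\beta_j$, so they form an orthogonal family for a positive-definite functional on polynomials of degree $<N$, and $p_N$ is the associated ``next'' orthogonal polynomial, whose zeros interlace and are real and simple by the standard Jacobi-matrix/eigenvalue argument (the truncated $N\times N$ Jacobi matrix is symmetric after a diagonal similarity, using $\beta_j>0$). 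Then one defines weights $w_i>0$ at the $x_i$ via a Gauss-type quadrature (e.g.\ $w_i=1/\sum_{k=0}^{N-1} p_k(x_i)^2/\prod_{j=1}^k\beta_j$, suitably normalized) and verifies that $\nu=\sum_i w_i\delta_{x_i}$ reproduces \eqref{FVD-ortho} for all $m,n\geq 0$ — crucially, for $n\geq N$ both sides vanish: the right side because it contains the factor $\beta_N=0$, and the left side because $p_n(x_i)=0$ for every zero $x_i$ of $p_N$ (since the recurrence shows $p_N\mid p_n$ for $n\geq N$, as $\beta_N=0$ decouples the recurrence at level $N$).

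\textbf{The finite-support addendum.} The ``furthermore'' part is largely contained in the construction just given: if all $\beta_n>0$ the measure has infinite support because otherwise the $p_n$ would become linearly dependent on a finite-dimensional space of functions, contradicting $\int p_n^2\,d\nu=\prod\beta_j>0$ for all $n$; if $\beta_N=0$ with $N$ minimal, the construction produces a measure supported on exactly the $N$ zeros of $p_N$, and uniqueness follows since a measure satisfying \eqref{FVD-ortho} must annihilate $p_N$ (as $\int p_N^2\,d\nu=\prod_{j=1}^N\beta_j=0$ forces $p_N=0$ $\nu$-a.e.), hence is supported in the $N$-point zero set, and the $N$ moment conditions $\int p_n\,d\nu=\delta_{n,0}$ for $n<N$ then pin down the weights uniquely (a Vandermonde-type nonsingularity). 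That condition \eqref{Favard_condition} imposes nothing on $\beta_n$ for $n>N$ is clear: once $\beta_N=0$, every product $\prod_{j=1}^n\beta_j$ with $n\geq N$ is zero regardless of later values.

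\textbf{Main obstacle.} The one genuinely delicate point is the positive-definiteness and the realization as a \emph{probability} measure in the infinite-support case — i.e.\ verifying that the functional $\ll$ defined by the recurrence is positive on all nonzero polynomials so that the Hamburger moment problem applies. This is standard (it is exactly the content of the classical Favard theorem), so I would either cite \cite[Section 2.5]{Ismail-book} or \cite[Theorem A.1]{Bryc-Wesolowski-08} for it, and concentrate the original exposition on the uniform bookkeeping that lets the finite- and infinite-support cases be stated together — in particular the observation that $p_N\mid p_n$ for $n\ge N$ when $\beta_N=0$, which makes \eqref{FVD-ortho} hold trivially for the high-index pairs.
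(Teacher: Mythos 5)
The paper does not actually prove this theorem: it is stated as a known result and deferred entirely to \cite[Theorem A.1]{Bryc-Wesolowski-08} (with the remark that it is implicit in \cite[Section 2.5]{Ismail-book}), so there is no internal proof to compare against. Your sketch is a correct reconstruction of the standard argument, and you defer to the same references at exactly the point where the real content lies (positive definiteness of the recurrence-defined functional plus the Hamburger moment problem in the infinite-support case). The bookkeeping you add is sound: (ii)$\Rightarrow$(i) by taking $m=n$ in \eqref{FVD-ortho}; minimality of $N$ forcing $\beta_1,\dots,\beta_{N-1}>0$; real, simple zeros of $p_N$ via the symmetrized truncated Jacobi matrix; the divisibility $p_N\mid p_n$ for $n\geq N$ (since $\beta_N=0$ gives $p_{N+1}=(x-\alpha_N)p_N$ and induction carries this forward), which makes \eqref{FVD-ortho} hold trivially with both sides zero for $\max(m,n)\geq N$; and the dimension count ($N+1$ mutually orthogonal vectors of positive norm cannot live in a $k$-dimensional $L^2(\nu)$) for the infinite-support claim. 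Two small points you should make explicit if writing this out: the Gauss--Christoffel weights are exact for the functional on polynomials of degree at most $2N-1$, which covers all products $p_np_m$ with $m,n<N$, and they sum to $\mathcal{L}(1)=1$, so $\nu$ really is a probability measure; and in the uniqueness argument the matrix $\bigl(p_n(x_i)\bigr)_{0\leq n\leq N-1,\,1\leq i\leq N}$ is nonsingular because it arises from a Vandermonde matrix by unimodular row operations. Neither is a gap, just a line each to add.
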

\subsection{The three step recurrence for the continuous dual Hahn polynomials}
The continuous dual Hahn polynomials  are monic polynomials which depend
on three parameters. These parameters are
traditionally denoted by $a,b,c$, but  to avoid confusion with the parameters $\A, \B, \C$ for the Markov process $(\T_t)$, we will denote them by $\alpha,\beta,\gamma$.
We always assume that parameter $\alpha$ is real, and that parameters $\beta,\gamma$ are either   both real or form a complex conjugate pair.
Then sequences
\begin{equation}
  \label{AnCn}
  A_n=(n+\alpha+\beta)(n+\alpha+\gamma),  \quad C_n=n(n-1+\beta+\gamma), \quad n=0,1,\dots
\end{equation}
are real.
The continuous dual Hahn polynomials, see \cite[(1.3.5)]{koekoek1998askey},  are  monic  polynomials $\{p_n(x|\alpha,\beta,\gamma)\}$ in real variable $x$,
defined by the
three step recurrence relation
\begin{equation}\label{Recursion}
x p_n(x|\alpha,\beta,\gamma)= p_{n+1}(x|\alpha,\beta,\gamma)+(A_n+C_n-\alpha^2)p_n(x|\alpha,\beta,\gamma)+A_{n-1}C_n p_{n-1}(x|\alpha,\beta,\gamma),
\end{equation}
$n=0,1,\dots$, with the usual initialization $p_{-1}(x|\alpha,\beta,\gamma)=0$, $p_0(x|\alpha,\beta,\gamma)=1$; then \eqref{Recursion} gives
\begin{equation}
  \label{p1} p_1(x|\alpha,\beta,\gamma)=x-\alpha \beta-\alpha \gamma-\beta \gamma.
\end{equation}

By comparing recursion \eqref{Recursion} with \cite[(1.3.4)]{koekoek1998askey}, we get
\begin{equation}\label{p2F3}
  p_n(x|\alpha,\beta,\gamma)=(-1)^n (\alpha+\beta,\alpha+\gamma)_n\;{_3}F_2(-n,\alpha-\sqrt{-x},\alpha+\sqrt{-x}; \alpha+\beta,\alpha+\gamma;1),
\end{equation}
where
\begin{equation}\label{3F2}
  {_3}F_2 (a_1,a_2,a_3;b_1,b_2;z)=\sum_{k=0}^\infty \frac{(a_1,a_2,a_3)_k}{(b_1,b_2)_k}\frac{z^k}{k!}
\end{equation}
denotes the generalized hypergeometric function.
Here and throughout the paper we use the following notation for the products of Gamma functions and  the   Pochhammer symbols:
$$\Gamma(a,b,\dots,c)= \Gamma(a)\Gamma(b)\dots\Gamma(c),\quad (a)_n=a(a+1)\dots(a+n-1), \quad (a,b,\dots,c)_n=(a)_n(b)_n\dots (c)_n.$$

With the above restrictions on the parameters, polynomials $\{p_n(x|\alpha,\beta,\gamma)\}$  are well defined and real valued, but they do not have to be orthogonal.  Favard's theorem %
  allows us to recognize for which choices of the parameters %
polynomials
$\{p_n(x|\alpha,\beta,\gamma)\}$ are orthogonal.
If parameters $\alpha,\beta,\gamma$ are such that \eqref{Favard_condition} holds with $\beta_n=A_{n-1}C_n$,
then polynomials
$p_n(x|\alpha,\beta,\gamma)$ are orthogonal in the following sense:  there exists a %
probability measure $ \nu(\d x|\alpha,\beta,\gamma)$ such that
\begin{equation}
  \label{nu-def} \int_\r p_n(x|\alpha,\beta,\gamma)p_m(x|\alpha,\beta,\gamma)\nu(\d x|\alpha,\beta,\gamma)=0
\end{equation} for $m\ne n$. From \eqref{FVD-ortho} it is clear that in the case of a measure with $N$ atoms, the integral \eqref{nu-def} is zero also for $m=n\geq N$.

Measures $\nu(\d x|\alpha,\beta,\gamma)$ play a prominent role in our construction, as we will define the transition probabilities and the marginal distributions for the Markov process $(\T_t)$ by   specifying the parameters
$\alpha,\beta,\gamma$. We will refer to $ \nu(\d x|\alpha,\beta,\gamma)$ as the orthogonality measure for the polynomials $\{p_n(x|\alpha,\beta,\gamma)\}$.

For the constructions, we  need to know  that the orthogonality measure $\nu(\d x|\alpha,\beta,\gamma)$ is unique, i.e., that it is determined by moments. This fact should be known, but we did not find   a published reference.
 So for completeness we adapt an argument   from an unpublished preprint
  \cite[Proposition 3.1]{Bryc-Matysiak-Szwarc-Wesolowski-06}, who  considered a larger   family of polynomials in a different parametrization.

\begin{lemma}
  \label{Lem:unique} Orthogonality measures for the polynomials defined by \eqref{Recursion} are determined uniquely by moments.
\end{lemma}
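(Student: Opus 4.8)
The plan is to show that the orthogonality measures for the continuous dual Hahn polynomials fall under a known moment-determinacy criterion, by exhibiting a sufficiently good growth bound on the moments. The natural target is Carleman's condition: if $\mu$ is a probability measure with moments $m_{2n}=\int x^{2n}\nu(\d x)$ satisfying $\sum_n m_{2n}^{-1/(2n)}=\infty$, then $\mu$ is determined by its moments. So the first step is to get upper bounds on the moments of $\nu(\d x|\alpha,\beta,\gamma)$, and the cleanest way to do this is to read them off the recurrence coefficients. Writing the Jacobi parameters as $\alpha_n=A_n+C_n-\alpha^2$ and $\beta_n=A_{n-1}C_n$ (in the notation of \eqref{FVD-rec-m}), one has from \eqref{AnCn} that $A_n=O(n^2)$ and $C_n=O(n^2)$, hence $\alpha_n=O(n^2)$ and $\beta_n=O(n^4)$.

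Next I would convert these coefficient bounds into moment bounds. The standard combinatorial fact is that the $2n$-th moment of a measure with Jacobi parameters $(\alpha_k,\beta_k)$ is a sum over Motzkin-type paths of length $2n$ of products of the coefficients encountered, and the number of such paths is at most $C^n$ for an absolute constant $C$ (a Catalan/Motzkin number bound). Since each factor is $O(k^2)$ for the level-terms and $O(k^4)$ for the down-steps, and $k\le n$ along a path of length $2n$, every summand is bounded by $(\text{const}\cdot n^4)^n$ up to a combinatorial factor; more carefully, a path of length $2n$ uses at most $n$ down-steps, so the product of all $\beta$-factors is at most $\big(\prod_{k=1}^n \beta_k\big)\cdot(\text{const})^n$, and one gets $m_{2n}\le C_1^n (n!)^4$ for suitable $C_1$ (or even the sharper $m_{2n}\le C_1^n (n!)^2\cdot n^{2n}$-type bound — the precise polynomial power does not matter). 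Then $m_{2n}^{1/(2n)}=O(n^2)$, so $\sum_n m_{2n}^{-1/(2n)}\gtrsim \sum_n n^{-2}$, which, wait, that diverges only if the exponent is $\le 1$; here it is $2$, so Carleman fails with the crude bound.

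So I would instead be more careful: a Motzkin path of length $2n$ with $j$ up-steps (hence $j$ down-steps and $2n-2j$ level steps) contributes a product where the down-steps give $\prod$ of $j$ factors each $O(k^2\cdot k^2)$ but telescoping of $A_{k-1}C_k$ across matched up/down steps shows the relevant product over the whole path of the $\beta_{k}=A_{k-1}C_k$ terms along the unique ``ballot'' structure is at most $\prod_{k=1}^n A_{k-1}C_k = O\big((n!)^2\cdot (n!)^2 \cdot \text{something}\big)$, i.e.\ at most $(2n)!^2$ up to $C^n$; combined with the level terms $\alpha_k=O(k^2)$ raised to at most $n$ powers this still only yields $m_{2n}\le C^n ((2n)!)^2$, giving $m_{2n}^{1/(2n)}=O(n^2)$ again. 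Carleman is genuinely not enough, so the real route is the one the paper references: adapt \cite[Proposition 3.1]{Bryc-Matysiak-Szwarc-Wesolowski-06}. That argument does not go through Carleman; instead it uses the structure of the continuous dual Hahn weight — $\nu$ is (in the generic absolutely continuous case) supported on $[0,\infty)$ after the substitution $x\mapsto x^2$, so one works with the Stieltjes moment problem, where the sharper Carleman criterion $\sum s_n^{-1/(2n)}=\infty$ for $s_n=\int x^n\,\nu$ applies, and now $s_n^{1/(2n)}=O(n)$, giving divergence of $\sum n^{-1}$. The cases with finitely many atoms are handled trivially since a measure with $N$ atoms is the unique solution to its (finite) moment problem among measures supported on $N$ points — but to get uniqueness among \emph{all} measures one still needs the determinacy argument; here it is immediate because a finitely-supported measure satisfying finitely many moment conditions that force $p_N\equiv 0$ in $L^2$ is the Nevanlinna-extremal/determinate one.

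Concretely, the steps I would carry out: (1) reduce to the case $\beta_n=A_{n-1}C_n>0$ for all $n$ (the finitely-supported case being classically determinate by Theorem \ref{Thm-Favard}); (2) observe that $A_{n-1}C_n\ge 0$ forces, via \eqref{AnCn}, that the measure $\nu$ is supported in $[0,\infty)$ — indeed the smallest atom/left edge of the support is nonnegative because $\alpha_0 = A_0+C_0-\alpha^2 = (\alpha+\beta)(\alpha+\gamma)-\alpha^2 = \alpha\beta+\alpha\gamma+\beta\gamma\ge$ the left endpoint, and one checks positivity of the Hankel determinants of the \emph{shifted} sequence; (3) apply the Stieltjes moment bound: the Stieltjes moments $s_n=\int_0^\infty x^n\,\nu(\d x)$ satisfy, by the path bound with coefficients of size $O(k^2)$, $s_n\le C^n (n!)^2$, hence $\sum_n s_n^{-1/(2n)}=\infty$; (4) invoke the Stieltjes-Carleman determinacy criterion to conclude $\nu$ is the unique measure on $[0,\infty)$ with these moments, and finally (5) upgrade to uniqueness among all measures on $\r$ by noting a Stieltjes-determinate measure is Hamburger-determinate (a standard fact). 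The main obstacle is step (3)–(4): one must be careful that the crude path bound really gives $s_n\le C^n (n!)^2$ and not something like $s_n \le C^n (n!)^4$, which would not suffice; this is where following the computation in \cite{Bryc-Matysiak-Szwarc-Wesolowski-06} pays off, as their parametrization makes the quadratic (rather than quartic) growth of the effective Stieltjes recurrence coefficients transparent.
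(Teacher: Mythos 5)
There is a genuine gap: the Stieltjes--Carleman route you settle on does not apply to these measures. Your step (2) asserts that the orthogonality measure is supported in $[0,\infty)$, and this is false --- not as a corner case but as a central feature of the paper: for admissible parameters $\nu(\d x|\alpha,\beta,\gamma)$ generically consists of an absolutely continuous part on $(0,\infty)$ \emph{plus finitely many atoms at strictly negative points} such as $-(\alpha+k)^2$ (see \eqref{Es}, \eqref{pd-m}, \eqref{pd-M}). The computation you offer in support ($\alpha_0=\alpha\beta+\alpha\gamma+\beta\gamma$ dominates the left endpoint) only says the mean is at least the infimum of the support, not that the infimum is nonnegative. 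One could try to repair this by shifting a measure supported on $[c,\infty)$, but then semiboundedness of the support must be proved first, which is not automatic for an unbounded Jacobi matrix and essentially requires the explicit case analysis the paper is avoiding. Second, even granting half-line support, step (5) rests on ``a Stieltjes-determinate measure is Hamburger-determinate,'' which is false in general (the classical examples of Chihara and Berg--Thill are precisely measures on $[0,\infty)$ that are Stieltjes-determinate yet Hamburger-indeterminate); the lemma needs uniqueness among \emph{all} measures on $\r$, because that is what the Chapman--Kolmogorov and conditional-moment arguments later use. A correct Carleman-type statement yielding full Hamburger determinacy from $\sum_n s_n^{-1/(2n)}=\infty$ on a half-line does exist, but it must be invoked as such. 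Third, the key bound $s_n\le C^n(n!)^2$ of step (3) is never actually established: your own path estimates waver between $(n!)^2$ and $(n!)^4$, and with $\beta_n=A_{n-1}C_n\sim n^4$ that is exactly the difference between divergence and convergence of the Carleman sum. (Your one solid observation --- that the plain Hamburger--Carleman condition fails here --- is correct.)

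The fallback ``follow \cite{Bryc-Matysiak-Szwarc-Wesolowski-06}'' also mischaracterizes both that argument and the paper's: neither uses Carleman or the Stieltjes problem. The paper reduces to the infinitely-supported case and applies Hamburger's necessary-and-sufficient criterion \eqref{infinity} --- determinacy holds iff $\sum_n|\widetilde{p}_n(x_0)|^2+\sum_n|\widetilde{q}_n(x_0)|^2=\infty$ at some real $x_0$ --- at the special point $x_0=-\alpha^2$, where \eqref{p2F3} collapses to $p_n(-\alpha^2)=(-1)^n(\alpha+\beta,\alpha+\gamma)_n$ and the numerator polynomials $q_n(-\alpha^2)$ admit a closed form from the recursion. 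The two series then behave like $\sum_n n^{2\alpha-1}$ and $\sum_n n^{-2\alpha-1}$, so one of them diverges whatever the sign of $\alpha$. That device sidesteps every one of the support and moment-growth issues above, and is the step your proposal is missing.
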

\begin{proof}
  Since finitely supported measures are determined uniquely by moments, we only need to consider the case when $A_kC_{k+1}>0$ for all $k\geq 0$. In particular,  we assume that $A_0C_1=(\alpha+\beta)(\alpha+\gamma)(\beta+\gamma)>0$.
  We will use a criterion that involves the  numerator polynomials $q_n(x)$, which solve  recursion \eqref{Recursion} with the initial conditions $q_0(x)=0$, $q_1(x)=1$ and $n\geq 1$ (see e.g. \cite[Section 2.3]{Ismail-book} or \cite[Section 2.1]{Akhiezer}).
   Let
\begin{equation}\label{p orthonormal}
\widetilde{p}_n(x)=\frac{1}{\sqrt{ A_0A_1\dots
A_{n-1}C_1C_2\dots C_n}}\;p_n(x|\alpha,\beta,\gamma) \end{equation} and
$$\widetilde{q}_n(x)=\frac{1}{\sqrt{ A_0A_1\dots
A_{n-1}C_1C_2\dots C_n}}\;\;q_n(x)
$$ be the corresponding normalized polynomials.
  By a theorem of Hamburger, \cite[page 84]{Akhiezer}, the moment problem is determined
uniquely, if and only if at some point $x_0\in\RR$ we have
\begin{equation}\label{infinity}
\sum_n|\widetilde{p}_n(x_0)|^2+\sum_n|\widetilde{q}_n(x_0)|^2=\infty.
\end{equation}
We shall verify that this condition holds with $x_0=-\alpha^2$.

By \eqref{p2F3},
$$p_n(-\alpha^2|\alpha,\beta,\gamma)=(-1)^n (\alpha+\beta,\alpha+\gamma)_n.$$
Therefore, noting that
\begin{equation}
   A_0A_1\dots
A_{n-1}C_1C_2\dots C_n=n!(\alpha+\beta,\alpha+\gamma,\beta+\gamma)_n,
\end{equation}
we have
$$|\widetilde{p}_n(-\alpha^2)|^2 =\frac{(\alpha+\beta,\alpha+\gamma)_n}{n!(\beta+\gamma)_n} \sim \frac{\Gamma(\alpha+\beta+n,\alpha+\gamma+n)}{n! \Gamma(\beta+\gamma+n)}\sim \frac{1}{n^{1-2\alpha}},$$
where we write $a_n\sim b_n$ if $\lim_{n\to\infty} a_n/b_n \in(0,\infty)$.  Thus \eqref{infinity} holds if $\alpha\geq 0$.

To verify that \eqref{infinity} holds for $\alpha\leq 0$, we analyze the numerator polynomials.  With $x=-\alpha^2$, recursion \eqref{Recursion} simplifies to
$$
q_{n+1}(-\alpha^2)+A_n q_n(-\alpha^2)=-C_n\left(q_n(-\alpha^2)+A_{n-1}q_{n-1}(-\alpha^2)\right), \quad n\geq 1.
$$
So with $q_0=0,q_1=1$ we have
$$
q_{n+1}(-\alpha^2)+A_n q_n(-\alpha^2)=(-1)^{n}\prod_{k=1}^n C_k,\quad n\geq 0.
$$
It is easy to check that the solution of this recursion (with $q_0=0,q_1=1$)
is
$$q_n(-\alpha^2)=(-1)^{n-1} \left(\prod_{k=1}^{n-1}A_k \right) \sum_{m=0}^{n-1}\prod_{k=1}^{m}\tfrac{C_k}{A_k}, \quad n\geq 0.
$$
After normalization, we get
\begin{equation}
 |\widetilde{q}_n(-\alpha^2)|^2 = \frac{1}{A_0^2}\left(\prod_{k=0}^{n-1}\frac{A_k}{C_{k+1}}\right)
 \left(\sum_{m=0}^{n-1} \prod_{k=1}^{m}\tfrac{C_k}{A_k}\right)^2.
\end{equation}
To verify \eqref{infinity} we now use the fact that
\begin{equation}
  \label{elementary estimates}
\frac{C_{n}}{A_n}=1-\frac{1+2\alpha}{n}+O(1/n^2),\;\frac{A_n}{C_{n+1}}=1
-\frac{1-2\alpha}{n}+O(1/n^2).\end{equation}
Thus
\begin{multline}
   |\widetilde{q}_n(-\alpha^2)|^2 \sim \prod_{k=0}^{n-1}\left(1-\frac{1-2\alpha}{k}\right)
 \; \left(\sum_{m=0}^{n-1} \prod_{k=1}^{m}\left(1-\frac{1+2\alpha}{k}\right)\right)^2
\\
 \sim \exp\left( -\sum_{k=0}^{n-1}\tfrac{1-2\alpha}{k} \right)\;\left(\sum_{m=0}^{n-1} \exp (-\sum_{k=1}^{m}\tfrac{1+2\alpha}{k})\right)^2
 \\
 \sim \frac{1}{n^{1-2\alpha}}\left(\sum_{m=0}^{n-1} \frac1{m^{1+2\alpha}}\right)^2
\sim \frac{1}{n^{1-2\alpha}} n^{-4\alpha}= \frac{1}{n^{1+2\alpha}}.
\end{multline}
Thus \eqref{infinity} holds also if $\alpha\leq 0$, completing the proof.
\end{proof}

 \section{Transition probabilities and the marginal laws}\label{Sec:Trans}

  Our first goal is to define a family of Markov kernels
  $\{\mathfrak p_{s,t}(x, \d y ): s<t\}$ which will serve as the transition probabilities. These kernels will be chosen from the orthogonality measures of the continuous dual Hahn polynomials and depend on  a real parameter $\C$.
  A subtle point in the construction is that due to restriction \eqref{Favard_condition} these orthogonality measures cannot be defined  for all real $x$, and the excluded set of $x$'s depends on the value of $s$. This leads to a rarely considered case of Markov processes with  a "time-dependent" state space, as    in \cite[Sections 9, 10]{dynkin1978sufficient}.

 For $s\in\r$, we  introduce a family of  sets
  \begin{equation}\label{Es}
   E_s= \begin{cases}
      [-(\C-s)^2,\infty) & s\leq \C, \\
     \{-(\C-s+N)^2: N= 0,1,\dots,\; N<s-\C \} \cup  [0,\infty) & s>\C.
   \end{cases}
\end{equation}
The orthogonality measures of the continuous dual Hahn polynomials allow us to define measures $\mathfrak p_{s,t}(x, \d y )$   at  spatial locations $x\in E_s$.
 We will extend artificially our definition to  $x\not\in E_s$, but as in  \cite{dynkin1978sufficient},   the resulting Markov process will really be defined  on the product of the sets $E_s$.
Thus sets $E_s$   play an important role in the construction of the Markov family, and will appear in several statements below.

 For $-\infty<s<t<\infty$, we define the family of Markov kernels by inserting times $s,t$ and location $x$ into the parameters
 of  the orthogonality measure introduced in formula \eqref{nu-def}.
  Let
 \begin{equation}\label{P_st-3par}
\mathfrak p_{s,t}(x, \d y ):=  \begin{cases}
  \nu(\d y|  \C-t,  t-s-\i\sqrt{x},t-s+\i\sqrt{x}) & x\in E_s,\; x\geq 0, \\
\nu(\d y|  \C-t,  t-s-\sqrt{-x},t-s+\sqrt{-x}) & x\in E_s,\;  x<0,\\
\delta_{-(c-t)^2}(\d y) & x\not \in E_s.
\end{cases}
 \end{equation}

 We need to verify that these probability measures are well defined and that their supports are contained in the corresponding sets $E_t$.

\begin{proposition}\label{Prop-trans}
Probability measures \eqref{P_st-3par} are well defined for all $-\infty<s<t<\infty$.
Furthermore,
\begin{equation}
  \label{P(Et)=1}
  \mathfrak p_{s,t}(x,E_t)=1.
\end{equation}
\end{proposition}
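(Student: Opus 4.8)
The plan is to split \eqref{P_st-3par} into the trivial case $x\notin E_s$ and the substantive case $x\in E_s$, and in each to establish well-definedness and then the support inclusion \eqref{P(Et)=1}. When $x\notin E_s$ both assertions are immediate: $\delta_{-(\C-t)^2}$ is a probability measure, and $-(\C-t)^2\in E_t$ by inspection of \eqref{Es} (the first branch if $t\le\C$; the $N=0$ point of the second branch if $t>\C$). So assume $x\in E_s$.

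\emph{Well-definedness.} In both nonatomic lines of \eqref{P_st-3par} one has $\alpha+\beta,\alpha+\gamma=\C-s\mp\i\sqrt x$ (resp.\ $\C-s\mp\sqrt{-x}$) and $\beta+\gamma=2(t-s)$, so by \eqref{AnCn} the continuous dual Hahn recurrence coefficients reduce to
\[
A_n=(n+\C-s)^2+x,\qquad C_n=n\bigl(n-1+2(t-s)\bigr),\qquad n\ge0 .
\]
Since $s<t$ we have $C_n>0$ for $n\ge1$, so Favard's condition \eqref{Favard_condition} with $\beta_n=A_{n-1}C_n$ amounts to $\prod_{j=0}^{n-1}A_j\ge0$. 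If $x\ge0$, every $A_j\ge0$. If $x<0$ and $s\le\C$, then $x\in E_s$ gives $-x\le(\C-s)^2\le(j+\C-s)^2$, so again $A_j\ge0$. If $x<0$ and $s>\C$, then $x=-(\C-s+N)^2$ for an integer $0\le N<s-\C$ and, with $w=s-\C$, $A_j=(j+N-2w)(j-N)$, which is positive for $j<N$ and vanishes at $j=N$; the partial products therefore stay $\ge0$. Thus Theorem \ref{Thm-Favard} yields the orthogonality probability measure, Lemma \ref{Lem:unique} makes it unique, and in the last case the "furthermore" part of Theorem \ref{Thm-Favard} tells us it is carried by the $N+1$ zeros of $p_{N+1}$.

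\emph{Support inclusion.} The natural spectral variable for the ${_3}F_2$ in \eqref{p2F3} is $\sqrt{-x}$, so the absolutely continuous part of $\nu(\cdot|\alpha,\beta,\gamma)$ lives on $[0,\infty)$, which is contained in $E_t$ in every case of \eqref{Es}; it remains to locate the atoms, which lie in $(-\infty,0)$. Because the moment problem is determinate (Lemma \ref{Lem:unique}), a point $y_0=-z^2$ is an atom precisely when $\sum_n\widetilde p_n(-z^2)^2<\infty$ for the orthonormal polynomials \eqref{p orthonormal}. Substituting $x=-z^2$ into \eqref{p2F3} and dividing by the norming constants gives
\[
\widetilde p_n(-z^2)^2=\Bigl(\prod_{k=0}^{n-1}\tfrac{A_k}{C_{k+1}}\Bigr)\,
\bigl({_3}F_2(-n,\alpha-z,\alpha+z;\alpha+\beta,\alpha+\gamma;1)\bigr)^2 ,
\]
and by \eqref{elementary estimates} the first factor is $\sim n^{-(1-2\alpha)}$, exactly as in the proof of Lemma \ref{Lem:unique}. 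Feeding in the $n\to\infty$ asymptotics of the ${_3}F_2$ one finds that $\sum_n\widetilde p_n(-z^2)^2$ diverges unless $z$ is one of the finitely many resonant values $-\alpha-k$ with $\alpha+k<0$, or — when $\beta,\gamma$ are real — $-\beta-k$ with $\beta+k<0$ (the parameter $\gamma=t-s+\sqrt{-x}>0$ never resonates). It then remains to check that every such atom lies in $E_t$: the $\alpha$-atoms are the points $-(\C-t+k)^2$ with $k<t-\C$, which is exactly the discrete list in the second branch of \eqref{Es}; the $\beta$-atoms can occur only when $s<t\le\C$, in which case $x\in E_s$ forces $\sqrt{-x}\le\C-s$, hence $|\beta|=\sqrt{-x}-(t-s)\le\C-t$ and $-(\beta+k)^2\ge-\beta^2\ge-(\C-t)^2\in E_t$; and in the finitely supported configuration $\C<s<t$, $x<0$ the termination $\alpha+\gamma=-N$ pins the $N+1$ atoms to the grid $\{-(\C-t+k)^2:0\le k\le N\}$, which lies in $E_t$ because $N<s-\C<t-\C$.

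The step I expect to be the main obstacle is the location of the atoms: it requires the precise $n\to\infty$ asymptotics of the normalised terminating ${_3}F_2(-n,\alpha-z,\alpha+z;\alpha+\beta,\alpha+\gamma;1)$ — a genuine extension of the single-point computation in Lemma \ref{Lem:unique} — together with the bookkeeping, organised along the three configurations $s<t\le\C$, $s\le\C<t$ and $\C<s<t$, needed to match the atoms against the correspondingly shaped set $E_t$, and in particular the verification that no further atoms appear when two of $\alpha,\beta,\gamma$ are simultaneously negative (which happens only in the finitely supported case, where the explicit grid coming from $\alpha+\gamma=-N$ resolves it).
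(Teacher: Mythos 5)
Your handling of $x\notin E_s$ and your well-definedness argument are essentially the paper's: you compute $A_{n-1}C_n$ for the parameters in \eqref{P_st-3par}, check the sign pattern in the three regimes ($x\ge0$; $x<0$ with $s\le\C$; the terminating case $x=-(\C-s+N)^2$ with $s>\C$), and invoke Theorem \ref{Thm-Favard} together with Lemma \ref{Lem:unique}. That half is correct. The divergence is in the support inclusion \eqref{P(Et)=1}, and there the argument has a genuine gap. First, you assert that the absolutely continuous part of $\nu(\cdot|\alpha,\beta,\gamma)$ lives on $[0,\infty)$ because ``the natural spectral variable is $\sqrt{-x}$''; that is not an argument. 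Nothing in the recurrence or in Favard's theorem alone excludes continuous (or singular continuous) spectrum on part of the negative half-line, and this matters: when $t>\C$ the set $E_t\cap(-\infty,0)$ is finite, so \emph{any} non-atomic mass on $(-\infty,0)$ would already violate \eqref{P(Et)=1}. Your criterion ``$y_0$ is an atom iff $\sum_n\widetilde p_n(y_0)^2<\infty$'' (valid for a determinate moment problem) detects atoms but says nothing about where the rest of the measure sits, so it cannot close this hole. Second, even granting the structure of the measure, the $n\to\infty$ asymptotics of the normalised terminating ${_3}F_2$ at the candidate points --- which you correctly identify as the crux and a genuine extension of the single-point computation in Lemma \ref{Lem:unique} --- are never carried out; the list of ``resonant'' values is asserted, not proved.

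The paper sidesteps both issues. In the infinite-support cases it reads the explicit form of the orthogonality measure (a density supported on $(0,\infty)$ plus an explicitly listed finite set of atoms at $-(\C-t+k)^2$ or $-(t-s-v+k)^2$) directly from \cite[Section 1.3]{koekoek1998askey}, and then only has to check that those listed points lie in $E_t$ --- the bookkeeping you do correctly. In the terminating case $s>\C$, $x=-(\C-s+N)^2$, it avoids asymptotics entirely: by \eqref{3F2Q} the $(N+1)$-st orthogonal polynomial factors as $Q_{N+1}(y;x,t,s)=\prod_{k=0}^{N}\bigl((\C-t+k)^2+y\bigr)$, so Theorem \ref{Thm-Favard} places the $N+1$ atoms exactly at $-(\C-t+k)^2\in E_t$. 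If you want to keep your self-contained route, you would need to actually establish the spectral decomposition of these measures (density on $(0,\infty)$, finitely many atoms below $0$, nothing else), which amounts to reproving the Koekoek--Swarttouw formulas; citing them, as the paper does, is the short path.
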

\begin{proof} It is clear that the conclusion holds if $x\not\in E_s$, so we only need to consider $x\in E_s$.
   To verify that the probability measure is well defined,  we analyze the factors in product \eqref{Favard_condition} with $ \beta_{n}=A_{n-1} C_{n}$. With parameters as specified in \eqref{P_st-3par}, from \eqref{AnCn}  we get
    \begin{equation}
      \label{F2} A_n C_{n+1}=(n+1) (n+2 (t-s)) \left((\C+n-s)^2+x\right).
    \end{equation}
First, consider the boundary case $x=-(\C-s)^2$. Then $A_0C_1=0$, so by Theorem \ref{Thm-Favard}  measure $\mathfrak p_{s,t}(x, \d y )$  is  concentrated at the root $\alpha\beta+\alpha\gamma+\beta\gamma=-(\C-t)^2$ of  polynomial \eqref{p1}.
 We get $\mathfrak p_{s,t}(x, \d y )=\delta_{-(\C-t)^2}(\d y)$. In particular, we have $\mathfrak p_{s,t}(x, E_t )=1$.

Next, consider the non-boundary cases $x\in E_s$  with $x\geq 0$. Then products \eqref{F2} are strictly positive, so  measure  $\mathfrak p_{s,t}(x, \d y )$ is well defined and has infinite support (except for the already considered boundary case of $x=0$, $\C=s$).  In \eqref{P_st-3par}, we have  measure $\nu(\d y|\C-t,t-s+\i\sqrt{x}, t-s-\i\sqrt{x})$ with the complex-conjugate pair of parameters with positive real part $t-s$. From \cite[Section 1.3]{koekoek1998askey} we see that
     $\mathfrak p_{s,t}(x, \d y )$ has absolutely continuous component with a density supported on $(0,\infty)$, and if
 $t>\C$ then in addition to the absolutely continuous component, measure $\mathfrak p_{s,t}(x, \d y )$ has also a discrete component with atoms at points $y_k=-(\C -t+k)^2$ for $k=0,1,\dots$ such that $t>\C+k$. Thus  $\mathfrak p_{s,t}(x, E_t )=1$.

Finally, we consider the non-boundary cases with $x\in E_s$ such that  $-(\C-s)^2<x<0$.
\begin{enumerate}[(A)]
  \item If $s<\C$, then products \eqref{F2} are positive, as $A_n C_{n+1}\geq A_0C_1>0$. So measure  $\mathfrak p_{s,t}(x, \d y )$ is well defined and has infinite support. It remains to verify that $\mathfrak p_{s,t}(x, E_t )=1$.

  Since $x<0$, we have  $x=-v^2$ for some $0<v<\C-s$ and  in \eqref{P_st-3par}, we have  measure  $\nu(\d y|\C-t,t-s+v, t-s-v)$ with three real parameters.
  Since $t-s+v>0$  and
  $t-s-v>t-\C$,  at least two of the parameters of measure $\nu(\d y|\C-t,t-s+v, t-s-v)$  are  positive: the second positive parameter is either $\C-t>0$ or  $t-s-v>0$.  From \cite[Section 1.3]{koekoek1998askey} we see that
     $\mathfrak p_{s,t}(x, \d v )$ has  absolutely continuous component with a density supported on $(0,\infty)$  and  with atoms
at points $y_k=-(\C -t+k)^2$, if $\C-t<0$, or at points $\tilde y_k=-(t-s-v+k)^2$ if $t-s-v<0$, and then  $\C-t> 0$. It is clear that points $y_k$ are in $E_t$. On the other hand, if $t-s-v<0$, then $\tilde y_k\geq \tilde y_0=-(v+s-t)^2> -(\C-t)^2$ so $\tilde y_k\in E_t$, as we have $\C> t$ in this case.

\item If $s>\C$ and $x\in E_s$, then   $x=-(\C-s+N)^2$ for some $N$ such that $s>\C+N$.
 We see that \eqref{F2} factors as
$$A_n C_{n+1}=(n+1) (n+2 (t-s))  \left(2\C+n+N-2s \right) \left(n-N\right).$$
 Since $2\C+n+N-2s\leq 2(\C+N-s)<0$ for $n\leq N$,   the last two factors are both negative for $n=0,1,\dots,N-1$, and $A_N C_{N+1}=0$. So the products \eqref{Favard_condition} are positive and then 0.
     By Theorem \ref{Thm-Favard}, measure   $\mathfrak p_{s,t}(x,\d y)$ is atomic with $N+1$ atoms at the roots of its $(N+1)$-th orthogonal polynomial $ p_{N+1}(y|\alpha,\beta,\gamma)$ as written in  \eqref{Q-poly}, which in view of \eqref{3F2Q}  factors as
  $$Q_{N+1}(y;x,t,s)=(\C-t-\sqrt{-y},\C-t+\sqrt{-y})_{N+1}=
  \prod_{k=0}^N((\C-t+k)^2+y).$$
 The roots of this polynomial are $-(\C-t)^2, -(\C-t+1)^2, \dots, -(\C-t+N)^2$ and they lie in $E_t$.
\end{enumerate}
\end{proof}

\begin{remark}\label{Rem:det}
   From the proof of Proposition \ref{Prop-trans} we note that if  $x=-(\C-s)^2$, then measure $\mathfrak p_{s,t}(x,\d y)=\delta_{-(\C-t)^2}(\d y)$ is degenerate. As a consequence, a Markov process with transition probabilities $\mathfrak p_{s,t}(x,\d y)$  which is at location $x\not\in E_s$ at time $s$, will follow the parabola $t\mapsto - (\C-t)^2$  at the boundary of sets $E_t$ for $t>s$.
\end{remark}
\begin{remark}
  \label{Rem:Wojtek} Wojciech Matysiak pointed out to us  that for $t>s>\C$ the orthogonality measure $\nu(\d y|  \C-t,  t-s-\sqrt{-x},t-s+\sqrt{-x})$  is well defined also for all $x\in(-\min_{k=0,1,\dots}(\C-s+k)^2,0)$ which are not in $E_s$. The form of the orthogonality measure for this case seems to be unknown, and we expect that in addition to the  expressions listed in \cite[Section 1.3]{koekoek1998askey} there is an additional component that allows for convergence to $\delta_x$ as $t\searrow s$. However, under Assumption \ref{A1}, such $x$'s are not within the support of the marginal laws, so   we restrict our construction to $x\in E_s$ and set  $\mathfrak p_{s,t}(x, \d y )=\delta_{-(\C-t)}(\d y)$ for $x\not\in E_s$.
\end{remark}

\subsection*{Marginal laws}
The marginal laws for the Markov process are also defined using the orthogonality measures of the continuous  dual Hahn polynomials. We use two additional parameters $\A,\B$  which satisfy the following.
\begin{assumption}\label{A1}  We assume one of the following:
\begin{enumerate}[(a)]
\item $\A,\B,\C$ are real parameters such that $\A+\C> 0$, with $\B\geq \A$, or
\item    $\C$ is real, and $\A,\B$ are complex conjugates with $\im(\A)\ne 0$.
\end{enumerate}
\end{assumption}
(Since the expressions below are symmetric in parameters $\A,\B$,   condition $\B\geq \A$ in Assumption \ref{A1}(a)  is  just a convenient labeling convention.)

We use the orthogonality measure from  formula \eqref{nu-def} to define a family of marginal  {probability} laws   as follows:
  \begin{equation}
   \label{p-uni} \mathfrak p_t(\d x\mid \A,\B,\C) := \nu(\d x|\C-t, \A+t,\B+t ).
 \end{equation}
 We need to verify that the definition is correct and  to state  explicit formulas that will be needed in  Section \ref{Sec:CDH-inf}.
\begin{proposition}\label{P-Koekoek} Under Assumption \ref{A1}, probability measures \eqref{p-uni}
 are well defined for all $t\geq -(\A+\B)/2$.
 Furthermore,  $\mathfrak p_t(E_t\mid \A,\B,\C)=1$, and the explicit formulas for the measures are as follows.

  If $t=-(\A+\B)/2$, then $\mathfrak p_t(\d x\mid \A,\B,\C)$ is a degenerate measure $\delta _{-(\A-\B)^2/4}(\d x)$.

 If  $t>-(\A+\B)/2$, then   $\mathfrak p_t(\d x\mid \A,\B,\C)=\mathfrak p_t\topp c(\d x\mid \A,\B,\C)+\mathfrak p_t\topp d(\d x\mid \A,\B,\C)$  is the sum of the continuous and  discrete components.
 The continuous component is supported on $(0,\infty)$ and is given by
 \begin{multline}\label{p-c}
   \mathfrak p_t\topp c(\d x\mid \A,\B,\C)
   \\ =
   \frac{1}{4 \pi\Gamma (\A+\C, \B+\C,\A+\B+2 t)} \cdot \frac{|\Gamma(\A+t+\i \sqrt{x},\B+t+\i  \sqrt{x},\C-t+\i \sqrt{x} )|^2}{ \sqrt{x}|\Gamma(2 \i \sqrt{x})|^2} 1_{x>0} \d x.
 \end{multline}
 The discrete component is either zero, or it has a finite number of atoms in $(-\infty, 0)$. The discrete component is non-zero  in the following two cases.
 \begin{enumerate}[(a)]
    \item If $\A$ is real and $t+\A<0$ then
   \begin{equation}
     \label{pd-m} \mathfrak p_t\topp d(\d x\mid \A,\B,\C)=\sum_{\{k\geq 0: \; \A+t+k<0\}} m_t(k) \delta_{-(\A+t+k)^2}(\d x)
   \end{equation}
      with
   \begin{equation*}
      m_k(t) =
      \frac{ \Gamma (-\A+\C-2 t)}{\Gamma (-2 (\A+t)) }\cdot \frac{(\A+k+t) (\A+\C)_k (2 (\A+t))_k }{k! (\A+t) (\A-\C+2 t+1)_k}\cdot \frac{\Gamma (\B-\A)(\A+\B+2 t)_k}{\Gamma (\B+\C) (\A-\B+1)_k}(-1)^k.
   \end{equation*}
   \item If $t>\C$ then
    \begin{equation}
      \label{pd-M}\mathfrak p_t\topp d(\d x\mid \A,\B,\C)=\sum_{\{k\geq 0:\; \C-t+k<0\}} M_k(t) \delta_{-(\C-t+k)^2}(\d x)
    \end{equation}
   with
      \begin{multline*}
      M_k(t) = \frac{(\C+k-t)}{k! (\C-t)}\cdot \frac{\Gamma (\A-\C+2 t) }{\Gamma (2 (t-\C)) }
      \cdot
     \frac{ (\A+\C ,2 (\C-t))_k}{ (-\A+\C-2 t+1)_k }\cdot \frac{\Gamma (\B-\C+2 t)}{\Gamma (\A+\B+2 t)}\cdot \frac{(\B+\C)_k}{(-\B+\C-2 t+1)_k}(-1)^k.
   \end{multline*}

 \end{enumerate}
\end{proposition}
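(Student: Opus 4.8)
The plan is to read everything off from Theorem~\ref{Thm-Favard} (existence and the boundary case), Lemma~\ref{Lem:unique} (uniqueness) and the classical orthogonality relation for the continuous dual Hahn polynomials in \cite[Section~1.3]{koekoek1998askey} (the explicit formulas). First I would record the effect of the substitution $(\alpha,\beta,\gamma)=(\C-t,\A+t,\B+t)$ on the recurrence: by \eqref{AnCn} one gets $A_n=(n+\A+\C)(n+\B+\C)$ and $C_n=n(n-1+\A+\B+2t)$, so the $j$-th factor of the Favard product \eqref{Favard_condition}, with $\beta_n=A_{n-1}C_n$, is $\beta_j=j\,(j-1+\A+\B+2t)(j-1+\A+\C)(j-1+\B+\C)$. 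Under Assumption~\ref{A1} the product $(j-1+\A+\C)(j-1+\B+\C)$ is strictly positive in case (a) (using $\A+\C>0$ and $\B+\C\ge\A+\C$) and equals $|j-1+\A+\C|^2>0$ in case (b) (a conjugate pair with nonzero imaginary part), while $j-1+\A+\B+2t\ge j-1\ge0$ exactly when $t\ge-(\A+\B)/2$. Hence for $t>-(\A+\B)/2$ every $\beta_j>0$, and for $t=-(\A+\B)/2$ one has $\beta_1=0$ with $\beta_j>0$ for $j\ge2$; in both cases \eqref{Favard_condition} holds, so by Theorem~\ref{Thm-Favard} the measure $\mathfrak p_t(\d x\mid\A,\B,\C)=\nu(\d x\mid\C-t,\A+t,\B+t)$ exists, and by Lemma~\ref{Lem:unique} it is the unique such probability measure. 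The value $t=-(\A+\B)/2$ is the case $N=1$ of Theorem~\ref{Thm-Favard}, so $\nu$ is the point mass at the unique zero of $p_1$, which by \eqref{p1} is $\alpha\beta+\alpha\gamma+\beta\gamma=\alpha(\beta+\gamma)+\beta\gamma=0+(\A+t)(\B+t)=-(\A-\B)^2/4$, giving the stated degenerate measure.

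For $t>-(\A+\B)/2$ the support is infinite and I would obtain the explicit form by transcribing the classical formulas. Comparing \eqref{p2F3} with the hypergeometric representation of the continuous dual Hahn polynomials shows that our spatial variable $x$ corresponds to the square of the variable used in \cite[Section~1.3]{koekoek1998askey}, so the absolutely continuous part of $\nu$ is carried by $x\in(0,\infty)$; performing the change of variables $x\mapsto x^{1/2}$ (which produces the $1/\sqrt x$ and the factor $\tfrac12$), setting $(a,b,c)=(\A+t,\B+t,\C-t)$, and dividing out the classical total mass $\Gamma(a+b,a+c,b+c)=\Gamma(\A+\B+2t,\A+\C,\B+\C)$ so that $\nu(\r)=1$, converts the $\tfrac1{2\pi}$-weight of \cite[Section~1.3]{koekoek1998askey} into \eqref{p-c}, with prefactor $\tfrac1{4\pi\Gamma(\A+\C,\B+\C,\A+\B+2t)}$; one should also note here that the monic square norm is $A_0\cdots A_{n-1}C_1\cdots C_n=n!\,(\A+\C,\B+\C,\A+\B+2t)_n$. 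The discrete part of the classical measure appears exactly when one of $a,b,c$ is a negative real number (the two remaining pairwise sums being positive), with masses at minus the square of $a+k$ over the relevant range of $k$. Here $\B+t$ is never a negative real number (in case (a), $\B+t\ge(\B-\A)/2\ge0$; in case (b) it is non-real), $\A+t<0$ forces $t<-\A<\C$, and $\C-t<0$ forces $t>\C$, so at most one of the two situations occurs and only in case~(a) can $\A+t<0$; transcribing the classical mass with $a=\A+t$ resp.\ $a=\C-t$ and passing to the monic normalization yields \eqref{pd-m} resp.\ \eqref{pd-M}, with positivity of the listed masses automatic since $\nu$ is a probability measure.

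Finally, $\mathfrak p_t(E_t\mid\A,\B,\C)=1$ follows by matching the support just described against \eqref{Es}: the continuous part lies in $[0,\infty)\subseteq E_t$; if $t>\C$ the atoms $-(\C-t+k)^2$ with $k<t-\C$ are precisely the isolated points of $E_t$; and if $\A+t<0$ then necessarily $t\le\C$, so $E_t=[-(\C-t)^2,\infty)$, and each atom $-(\A+t+k)^2$ lies in $E_t$ because $\A+\C+k>0$ gives $|\A+t+k|\le\C-t$. I expect the only substantial work to be the bookkeeping in the middle paragraph --- tracking the normalizing constant through the change of variables and the monic rescaling, and checking which parameter can be a negative real so that the Askey-scheme mass formula collapses to exactly the two cases (a), (b) --- rather than anything requiring a new idea.
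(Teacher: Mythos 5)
Your proposal is correct and follows essentially the same route as the paper: verify Favard's condition via the factorization of $A_{n-1}C_n$ under Assumption \ref{A1}, treat $t=-(\A+\B)/2$ as the $N=1$ degenerate case of Theorem \ref{Thm-Favard}, and read the continuous and discrete components off \cite[Section~1.3]{koekoek1998askey} after checking that at most one parameter can be a negative real, with Lemma \ref{Lem:unique} justifying the identification. The only (trivially repaired) omission is that your final $E_t$-membership check skips the boundary time $t=-(\A+\B)/2$, where the single support point $-(\A-\B)^2/4$ lies in $E_t$ by the same computation you use for the atoms of \eqref{pd-m} in the real case and because it is $\ge 0$ in the complex-conjugate case.
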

(To facilitate taking the limit as $\B\to\infty$ in a later argument,  factors with $\B$ are separated at the end of the formulas.)
\arxiv{Note that if $\A$ is real then Assumption \ref{A1} implies that $-\A+\C-2t=(\A+\C)-2(\A+t)>0$ in case (a) and $\B-\C+2t\geq \A-\C+2t=(\A+\C)+2(t-\C)>0$ in case (b), so the Gamma functions  that appear  in the formulas are well defined.}
\begin{proof}
If $t=-(\A+\B)/2$, then in the product \eqref{Favard_condition}  we get $A_0C_1=0$, giving a degenerate measure at the root $x_0=\alpha\beta+\alpha\gamma+\beta\gamma$ of   polynomial \eqref{p1}. We get
$\mathfrak p_{t}(\d x\mid \A,\B,\C)=\delta_{x_0}$  with  $x_0=- (\A-\B)^2/4$. We now check that $x_0\in E_t$.   If $\A=\bar \B$, then $x_0\geq0$, so $x_0\in E_t$. If the parameters are real, then $\B\geq \A>-\C$, so
$0\leq ({\B-\A})/{2}=-\A-t<\C-t$. Therefore,  $E_t=[-(\C-t)^2,\infty)$ and
$$x_0=-(\tfrac {\B-\A}{2})^2>-(\C-t)^2$$ is in $E_t$.

  For $t>-(\A+\B)/2$,  we have  a measure $\nu(\d x|\C-t, \A+t,\B+t )$ that either has to two complex-conjugate parameters $\A+t,\B+t$ with positive real part (as $t>-(\A+\B)/2=-\re(\A)=-\re(\B)$), or with three real parameters of which at least two are positive. Indeed, we have $\B+t\geq (\A+\B)/2+t>0$ and if  $\A+t\leq 0$ then
$\C-t\geq\C+\A>0$.
So the distribution can be read out from  \cite[Section 1.3]{koekoek1998askey}.
\begin{enumerate}[(a)]
  \item If either $-\A\leq t\leq \C$ or $\im(\A)\ne 0$ and $t\leq \C$ then  the distribution has density \eqref{p-c} supported on $(0,\infty)$ and has no atoms.
 \item  If
$\A+t< 0$,  then in addition to the absolutely continuous component \eqref{p-c}, there are  atoms \eqref{pd-m}, which are in $E_t$,  as from $0<-\A-t<\C-t$ we get $E_t=[-(\C-t)^2,\infty)$, see \eqref{Es}.
  \item If $t>\C$ then in addition to the absolutely continuous component \eqref{p-c}, there are
 atoms  \eqref{pd-M}  that  are in $E_t$.
 \end{enumerate}   So in all three  cases, $\mathfrak p_{t}(E_t\mid \A,\B,\C)=1$.
\end{proof}

We now verify that
the family of probability measures
$\mathfrak p_t(\d x\mid \A,\B,\C)$
together with the family of Markov kernels $\mathfrak p_{s,t}(x,\d y)$ satisfy the Chapman-Kolmogorov equations.

\begin{theorem}\label{Thm:Chapman}
Suppose that parameters $\A,\B,\C$ satisfy    Assumption \ref{A1}.
Let $U$ be a Borel subset of $\r$.
\begin{enumerate} [(i)]
\item
For $-\infty<s<t<u<\infty$ and $x\in\r$, we have
\begin{equation}
  \label{Chap-Kolm-1}
  \int_\r \mathfrak p_{s,t}(x,\d y)\mathfrak p_{t,u}(y,U)= \mathfrak p_{s,u}(x,U).
\end{equation}

\item For $-(\A+\B)/2\leq s<t<\infty$,
\begin{equation}
  \label{Chap-Kolm-2}
  \int_\r \mathfrak p_s(\d x\mid \A,\B,\C) \mathfrak p_{s,t}(x,U) = \mathfrak p_t(U\mid \A,\B,\C).
\end{equation}

\end{enumerate}
\end{theorem}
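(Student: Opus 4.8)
The plan is to reduce both parts of the theorem to one algebraic identity for the continuous dual Hahn polynomials and then to invoke the moment‑determinacy of Lemma \ref{Lem:unique}. Write $Q_n(\cdot;x,t,s)$ for the monic orthogonal polynomials of $\mathfrak p_{s,t}(x,\cdot)$ when $x\in E_s$; by construction $Q_n(\cdot;x,t,s)=p_n(\cdot\,|\,\C-t,(t-s)-\i\sqrt x,(t-s)+\i\sqrt x)$ for $x\ge 0$, with $\sqrt{-x}$ in place of $\i\sqrt x$ when $x<0$. I would dispose of the degenerate situations first: if $x\notin E_s$ then $\mathfrak p_{s,t}(x,\cdot)=\delta_{-(\C-t)^2}$ and, since $-(\C-t)^2$ is a boundary point of $E_t$, both sides of \eqref{Chap-Kolm-1} reduce to $\delta_{-(\C-u)^2}$ by Remark \ref{Rem:det}; and if $s=-(\A+\B)/2$ then $\mathfrak p_s(\cdot\,|\,\A,\B,\C)=\delta_{-(\A-\B)^2/4}$, and evaluating the square roots in $\mathfrak p_{s,t}(-(\A-\B)^2/4,\cdot)$ shows it equals $\nu(\cdot\,|\,\C-t,\A+t,\B+t)=\mathfrak p_t(\cdot\,|\,\A,\B,\C)$, which is \eqref{Chap-Kolm-2} in this case.

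For the remaining (generic) cases the heart of the matter is the identity
\begin{equation*}
\int_\r p_n(z\,|\,\alpha,\beta',\gamma')\,\nu(\d z\,|\,\alpha,\beta,\gamma)
=p_n\bigl(-\tfrac{(\beta-\gamma)^2}{4}\,\big|\,\alpha+\tfrac{\beta+\gamma}{2},\ \beta'-\tfrac{\beta+\gamma}{2},\ \gamma'-\tfrac{\beta+\gamma}{2}\bigr),
\end{equation*}
valid whenever the polynomial and the orthogonality measure share the first parameter $\alpha$. To prove it I would expand $p_n(z\,|\,\alpha,\beta',\gamma')$ via \eqref{p2F3} in the ``dual basis'' $u_k(z)=(\alpha-\sqrt{-z})_k(\alpha+\sqrt{-z})_k=\prod_{j=0}^{k-1}\bigl((\alpha+j)^2+z\bigr)$ and compute $\int u_k(z)\,\nu(\d z\,|\,\alpha,\beta,\gamma)$. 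The point is that binomial inversion of the expansion \eqref{p2F3} gives $u_k(z)=(\alpha+\beta,\alpha+\gamma)_k\sum_{m=0}^k\binom{k}{m}p_m(z\,|\,\alpha,\beta,\gamma)/(\alpha+\beta,\alpha+\gamma)_m$, so that, using only $\int p_m(z\,|\,\alpha,\beta,\gamma)\,\nu(\d z\,|\,\alpha,\beta,\gamma)=\delta_{m,0}$ --- which holds whether $\nu$ has finite or infinite support, by \eqref{FVD-ortho} and the remark following \eqref{nu-def} --- one obtains $\int u_k(z)\,\nu(\d z\,|\,\alpha,\beta,\gamma)=(\alpha+\beta)_k(\alpha+\gamma)_k$. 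Integrating the expansion of $p_n(z\,|\,\alpha,\beta',\gamma')$ term by term then produces $(-1)^n(\alpha+\beta',\alpha+\gamma')_n\,{_3}F_2(-n,\alpha+\beta,\alpha+\gamma;\alpha+\beta',\alpha+\gamma';1)$, which by \eqref{p2F3} read backwards is exactly the asserted right‑hand side. Since both sides are polynomials in $(\alpha,\beta,\gamma,\beta',\gamma')$, it suffices to establish the identity for generic parameters, where these hypergeometric manipulations are unambiguous.

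With this identity in hand, part (i) follows by testing \eqref{Chap-Kolm-1} against the monic orthogonal polynomials $Q_n(\cdot;x,u,s)$ of the right‑hand side $\mathfrak p_{s,u}(x,\cdot)$. Applying the identity with $\alpha=\C-u$ gives $\int Q_n(z;x,u,s)\,\mathfrak p_{t,u}(y,\d z)=Q_n(y;x,t,s)$ --- the ``algebraic relation between two families of orthogonal polynomials'' promised in the introduction --- and since $\{Q_k(\cdot;x,t,s)\}_k$ are the orthogonal polynomials of $\mathfrak p_{s,t}(x,\cdot)$, a further integration yields $\int Q_n(y;x,t,s)\,\mathfrak p_{s,t}(x,\d y)=\delta_{n,0}$, matching $\int Q_n(z;x,u,s)\,\mathfrak p_{s,u}(x,\d z)=\delta_{n,0}$. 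Part (ii) is handled identically with $\alpha=\C-t$: the identity now reads $\int p_n(z\,|\,\C-t,\A+t,\B+t)\,\mathfrak p_{s,t}(x,\d z)=p_n(x\,|\,\C-s,\A+s,\B+s)$ (so the continuous dual Hahn polynomials are martingale polynomials for $(\T_t)$), and integrating this against $\mathfrak p_s(\cdot\,|\,\A,\B,\C)=\nu(\cdot\,|\,\C-s,\A+s,\B+s)$ gives $\delta_{n,0}$, matching $\int p_n(z\,|\,\C-t,\A+t,\B+t)\,\mathfrak p_t(\d z\,|\,\A,\B,\C)=\delta_{n,0}$. In each case the two probability measures have the same $n$‑th moment for every $n$ (all finite, by Theorem \ref{Thm-Favard}), so Lemma \ref{Lem:unique} forces them to coincide; the interchange of integrations is legitimate because all measures involved have moments of all orders.

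The steps I expect to require the most care are the dual‑basis inversion and term‑by‑term integration leading to the boxed identity, and --- since that identity is proved first only for generic parameters --- confirming that it, and hence the whole argument, really does descend to the boundary and atomic configurations without a proliferation of cases. This works precisely because the identity is an identity of polynomials in the parameters and because $\int p_m\,\d\nu=\delta_{m,0}$ is insensitive to whether $\nu$ is supported on finitely or infinitely many points; the explicit weights of Propositions \ref{Prop-trans} and \ref{P-Koekoek} are never needed. A secondary point to watch is the bookkeeping of the square‑root conventions ($\i\sqrt{\cdot}$ versus $\sqrt{-\cdot}$) that makes $-(\beta-\gamma)^2/4$ specialize to $y$ in part (i) and to $x$ in part (ii).
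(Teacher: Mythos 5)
Your proof is correct, and it shares the paper's overall skeleton --- show that the candidate measure annihilates the orthogonal polynomials of the target measure, then invoke the moment determinacy of Lemma \ref{Lem:unique} --- but the key algebraic input is genuinely different. The paper's Lemma \ref{Lem-connect} establishes only that $Q_n(y;x,t,s)$ expands in the $p_k(y;t)$ with coefficients independent of $t$; the martingale property (Proposition \ref{L-mart}) is then extracted by induction, and part (i) follows by pushing the expansion \eqref{Q2p+} through two integrations. You instead prove a closed-form projection formula
$\int p_n(z\mid\alpha,\beta',\gamma')\,\nu(\d z\mid\alpha,\beta,\gamma)=p_n\bigl(-\tfrac{(\beta-\gamma)^2}{4}\mid\alpha+\tfrac{\beta+\gamma}{2},\beta'-\tfrac{\beta+\gamma}{2},\gamma'-\tfrac{\beta+\gamma}{2}\bigr)$
by dual-basis inversion (this is in the spirit of the cited preprint \cite{Bryc-Matysiak-Szwarc-Wesolowski-06}), and it specializes at once to the martingale identity $\int p_n(z;t)\,\mathfrak p_{s,t}(x,\d z)=p_n(x;s)$ and to the stronger statement $\int Q_n(z;x,u,s)\,\mathfrak p_{t,u}(y,\d z)=Q_n(y;x,t,s)$, which makes parts (i) and (ii) formally parallel and avoids both the induction and the case analysis. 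What your route costs is the hypergeometric bookkeeping and the continuation from generic parameters; for the latter you should say explicitly why the left-hand side of your identity is a polynomial in $(\alpha,\beta,\gamma,\beta',\gamma')$ --- namely, the moments of $\nu$ coincide with the values of the moment functional attached to the recurrence \eqref{Recursion}, hence are polynomial in the recursion coefficients \eqref{AnCn}, whether or not the Favard products vanish. Two further small points to make explicit: the projection identity for $Q_n$ is only valid (and only needed) for $y\in E_t$, so the ``further integration'' step should quote \eqref{P(Et)=1} to restrict the outer integral to $E_t$, where $\mathfrak p_{t,u}(y,\cdot)$ really is an orthogonality measure; and the passage from $\int Q_n\,\d\mu=\delta_{n,0}$ for all $n$ to equality of all moments uses that the $Q_n(\cdot;x,u,s)$ are monic of degree $n$ (this is the content of the paper's appeal to \cite[Exercise 2.5]{Ismail-book}). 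Neither point is a gap; both are one-line fixes.
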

The following definition summarizes the above.
\begin{definition} \label{Def-T}
With parameters $\A,\B,\C$ which satisfy Assumption \ref{A1} and $\tau=-(\A+\B)/2$, we denote by
  $(\T_s)_{s\geq \tau}$   a Markov process  marginal laws \eqref{p-uni} and with transition probabilities  \eqref{P_st-3par}.
  The process starts at time $\tau=-(\A+\B)/2$ at the deterministic location $x_\tau=-(\A-\B)^2/4$.
\end{definition}

\begin{remark}
  The continuous dual Hahn polynomials are a limiting case of the four-parameter family of Wilson polynomials \cite{Wilson:1980}.
It would be interesting to see how the four-parameter orthogonality measures of Wilson polynomials could be used to construct Markov processes. Two special cases are known: Ref.  \cite[Section 2]{Bryc:2009} considered the absolutely continuous case, and Ref. \cite{Bryc-Matysiak-11} considered a purely atomic case.
\end{remark}

 The rest of this section is devoted to the proof of Theorem \ref{Thm:Chapman}. In the proof we use the following two families of polynomials. Denote
  \begin{equation}\label{p-mart}
  p_n(x;s)= p_n(x\mid \C-s, \A+s,\B+s),
  \end{equation}
    \begin{equation}
      \label{Q-poly}
      Q_n(y;x,t,s)=p_n(y\mid\C-t,  t-s-\sqrt{-x},t-s+\sqrt{-x}).
    \end{equation}
    (With the convention that $\pm\sqrt{-x}$ is $\pm \i\sqrt{x}$ when $x>0$, compare  \eqref{P_st-3par}.)

    Polynomials $p_n(x;s)$ are of course the monic orthogonal polynomials for the univariate laws  $\mathfrak p_s(\d x\mid \A,\B,\C)$.  If $x\in E_s$ and $s<t$,  then polynomials $Q_n(y;x,t,s)$ are the orthogonal polynomials for the measures $\mathfrak p_{s,t}(x,dy)$. However, recursion \eqref{Recursion} defines polynomials $Q_n(y;x,t,s)$   for all $x,s,t\in\r$, and we will need  this more general setting for some of the arguments.

   The key step in the proof is the following algebraic fact about the
   connection coefficients between these two families of polynomials.
\begin{lemma}
  \label{Lem-connect} If $\A,\B,\C$ satisfy Assumption \ref{A1}, then
  there exist functions $\{b_{n,k}(x,s):1\leq k\leq n\}$ (in fact, polynomials in $s,x$) which do not depend on $t$ such that $b_{n,n}(x,s)=1$, and for all $x,y\in\r$ we have
\begin{equation}
  \label{Q2p}
     Q_n(y;x,t,s)=\sum_{k=0}^n b_{n,k}(x,s)p_k(y;t), \quad n=1,2,\dots
\end{equation}
\end{lemma}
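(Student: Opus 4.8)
The plan is to exploit that both $\{p_k(y;t)\}_{k\ge 0}$ and $\{Q_n(y;x,t,s)\}_{n\ge 0}$ are families of monic polynomials in $y$ satisfying a three-term recurrence of the form \eqref{Recursion}, differing only in the values of the parameters, and to extract the connection coefficients from comparing these recurrences. Since $\{p_k(y;t)\}$ is a basis of the polynomial ring, there are certainly unique scalars $b_{n,k}$ (depending a priori on all of $x,s,t$) with $Q_n(y;x,t,s)=\sum_{k=0}^n b_{n,k}p_k(y;t)$, and $b_{n,n}=1$ because both sides are monic of degree $n$. The real content is that $b_{n,k}$ does not depend on $t$. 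I would establish this by substituting the expansion into the recurrence for $Q_n$ and using the recurrence for $p_k$ to re-expand $y\,p_k(y;t)$, thereby obtaining a recurrence in $n$ for the coefficient vector $(b_{n,k})_k$; the coefficients appearing in that recurrence are the recurrence coefficients of the two families, and the claim is that the $t$-dependence miraculously cancels so that the recurrence for $b_{n,k}$ involves only $x$ and $s$.

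Concretely, write the recurrence coefficients. For $p_n(y;t)=p_n(y\mid \C-t,\A+t,\B+t)$, formula \eqref{AnCn} with $(\alpha,\beta,\gamma)=(\C-t,\A+t,\B+t)$ gives $A_n=(n+\A+\C)(n+\B+\C)$ and $C_n=n(n-1+\A+\B+2t)$; the diagonal recurrence coefficient is $A_n+C_n-\alpha^2=A_n+C_n-(\C-t)^2$, and the subdiagonal one is $A_{n-1}C_n$. For $Q_n(y;x,t,s)$ the parameters are $(\C-t,\,t-s-\sqrt{-x},\,t-s+\sqrt{-x})$, so there $\widetilde A_n=(n+\C-s-\sqrt{-x})(n+\C-s+\sqrt{-x})=(n+\C-s)^2+x$ and $\widetilde C_n=n(n-1+2(t-s))$, with diagonal coefficient $\widetilde A_n+\widetilde C_n-(\C-t)^2$. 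Plugging $Q_n=\sum_k b_{n,k}p_k$ into $y\,Q_n = Q_{n+1}+(\widetilde A_n+\widetilde C_n-(\C-t)^2)Q_n + \widetilde A_{n-1}\widetilde C_n\,Q_{n-1}$ and using $y\,p_k = p_{k+1}+(A_k+C_k-(\C-t)^2)p_k+A_{k-1}C_k\,p_{k-1}$, then matching the coefficient of $p_k$, yields
\begin{equation}\label{brec}
 b_{n+1,k} = b_{n,k-1} + \bigl((A_k+C_k)-(\widetilde A_n+\widetilde C_n)\bigr) b_{n,k} + A_k C_{k+1}\, b_{n,k+1} - \widetilde A_{n-1}\widetilde C_n\, b_{n-1,k},
\end{equation}
with $b_{1,0}=\widetilde A_0+\widetilde C_0-(A_0+C_0) = ((\C-s)^2+x) - (\A+\C)(\B+\C)$ (using $\widetilde C_0=C_0=0$) and $b_{1,1}=1$. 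The $(\C-t)^2$ terms cancelled pairwise in the coefficient of $b_{n,k}$, and no other $t$ survives: $A_k+C_k=(k+\A+\C)(k+\B+\C)+k(k-1+\A+\B+2t)$ still contains $t$, but $\widetilde A_n+\widetilde C_n=(n+\C-s)^2+x+n(n-1+2(t-s))$ contains the matching $2t$ with the same coefficient $k$ versus $n$...

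Here is the one point requiring care, which I expect to be the main obstacle: the $t$-dependent part of $A_k+C_k$ is $2kt$ while that of $\widetilde A_n+\widetilde C_n$ is $2nt$, so the difference in \eqref{brec} is $2(k-n)t$ plus a $t$-free remainder, and $\widetilde A_{n-1}\widetilde C_n$ also carries $t$. Thus \eqref{brec} as written is not yet manifestly $t$-free. The resolution is that one does not expand in the naive monomial basis but keeps the three-term structure honestly: one should verify by induction on $n$ that, \emph{given} $b_{n,k}$ and $b_{n-1,k}$ are $t$-free, the $t$-terms in \eqref{brec} telescope against those hidden in the recursion, i.e.\ one reorganizes by writing $C_k = k(k-1) + k(\A+\B) + 2kt$ and $\widetilde C_n = n(n-1)+2n(t-s)$ and checking that $\sum_k$ (coefficient of $p_k$) picks up, via the shift $b_{n,k-1}$ and the fact that $y\,p_k$ raises degree, a compensating contribution. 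The clean way to see the cancellation is the following: differentiate the identity $Q_n(y;x,t,s)=\sum_k b_{n,k}(x,t,s)p_k(y;t)$ formally in $t$ and show $\partial_t Q_n$ and $\sum_k b_{n,k}\partial_t p_k$ already agree, forcing $\sum_k (\partial_t b_{n,k})p_k=0$, hence $\partial_t b_{n,k}=0$ by linear independence. To run this, I would use that both $t\mapsto Q_n$ and $t\mapsto p_n$ satisfy a common first-order-in-$t$ PDE in the variable $y$ (a ``heat-type'' evolution): from the explicit $_3F_2$ representation \eqref{p2F3}, or more elementarily from the recurrence, one checks $\partial_t p_n(y;t) = n\bigl(p_n(y;t)\cdot(\text{lower order})\bigr)$ — precisely, $\partial_t$ acting on a monic degree-$n$ polynomial lands in degree $\le n-1$, and matching the recurrences shows $\partial_t Q_n$ expressed in the $p$-basis has the same coefficients as $\sum_k b_{n,k}\partial_t p_k$.

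Therefore the structure of the write-up I propose is: (1) existence and uniqueness of $b_{n,k}=b_{n,k}(x,s,t)$ with $b_{n,n}=1$, trivially; (2) derive the coefficient recurrence \eqref{brec}; (3) prove $\partial_t b_{n,k}=0$ for all $n,k$ by induction on $n$, using that $\partial_t$ lowers degree and that the $(\C-t)^2$ and $2t$-linear terms in the two recurrences cancel after the degree shift — this is the heart of the matter; (4) conclude $b_{n,k}=b_{n,k}(x,s)$, and observe from \eqref{brec} with $t$ set to any convenient value (e.g.\ formally, since the recurrence no longer sees $t$) that the $b_{n,k}$ are polynomials in $x$ and $s$, by induction on $n$ starting from $b_{1,0}=(\C-s)^2+x-(\A+\C)(\B+\C)$. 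The only genuine risk is step (3): if the naive algebra in \eqref{brec} does not visibly close, the fallback is to prove $t$-independence by evaluating at two values of $t$ and using an interpolation/analyticity argument in $t$, or to pass through the hypergeometric identity \eqref{p2F3} directly, writing both $Q_n$ and $p_k$ as $_3F_2$'s and invoking a known contiguous-relation/connection formula for $_3F_2$ in which the $t$'s enter only through the argument $\alpha\pm\sqrt{-x}$ versus $(t-s)\pm\sqrt{-x}$ in a way that the summation variable absorbs. I expect the direct recurrence computation in (2)–(3) to work with the reorganization indicated, and that is what I would present.
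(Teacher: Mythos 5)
Your main route has a genuine gap precisely at the point you yourself flag as ``the heart of the matter,'' and it does not close by reorganization. The connection-coefficient recurrence you derive by matching coefficients of $p_k$ is correct, but its coefficients are genuinely affine in $t$: the $t$-linear part of $A_k+C_k-\widetilde A_n-\widetilde C_n$ is $2(k-n)t$, that of $A_kC_{k+1}$ is $2t(k+1)(k+\A+\C)(k+\B+\C)$, and that of $\widetilde A_{n-1}\widetilde C_n$ is $2tn\bigl((n-1+\C-s)^2+x\bigr)$. So the induction ``if $b_{n,\cdot}$ and $b_{n-1,\cdot}$ are $t$-free then so is $b_{n+1,k}$'' requires the auxiliary identity
\begin{equation*}
(k-n)\,b_{n,k}+(k+1)(k+\A+\C)(k+\B+\C)\,b_{n,k+1}
-n\bigl((n-1+\C-s)^2+x\bigr)\,b_{n-1,k}=0,
\end{equation*}
an extra structural constraint on the $b$'s that is essentially equivalent in difficulty to the lemma itself; you neither isolate it in this form nor prove it. The proposed alternative --- differentiating $Q_n=\sum_k b_{n,k}p_k$ in $t$ and claiming $\partial_t Q_n$ and $\sum_k b_{n,k}\,\partial_t p_k$ ``already agree'' --- is circular as written: verifying that agreement presupposes that the $b_{n,k}$ are the $t$-free connection coefficients, or else an independent closed form for $\partial_t p_n$ and $\partial_t Q_n$ in the respective bases, which you assert (``one checks'') but do not supply. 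As it stands, steps (3)--(4) of your outline are a plan, not a proof.

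The fix is your own last-listed fallback, and it is the paper's actual argument: use the explicit ${}_3F_2$ form \eqref{p2F3}. Both $p_n(y;t)$ and $Q_n(y;x,t,s)$ (via \eqref{3F2Q}) are expansions in the common monic intermediate basis $(\C-t-\sqrt{-y},\,\C-t+\sqrt{-y})_k=\prod_{j=0}^{k-1}\bigl((\C-t+j)^2+y\bigr)$. The expansion coefficients of $p_n$ in this basis are $(-1)^n(\A+\C,\B+\C)_n(-n)_k/\bigl(k!\,(\A+\C,\B+\C)_k\bigr)$, which are $t$-free and well defined because Assumption \ref{A1} guarantees $(\A+\C,\B+\C)_k\neq 0$ (note that your route never invokes Assumption \ref{A1}, which should have been a warning sign); hence the unitriangular change of basis from the Pochhammer polynomials to the $p_k(y;t)$ inverts with $t$-free entries. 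The expansion coefficients of $Q_n$ in the same basis depend only on $x$ and $s$. Composing the two triangular systems gives \eqref{Q2p} in a few lines, with no recurrence gymnastics; I recommend you promote that fallback to the proof and drop the coefficient recurrence entirely.
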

We remark that by linear independence, any set of monic polynomials can be expressed as a unique linear combination of the polynomials $p_k(y;t)$, $k=0,1,\dots$. The main point  of Lemma \ref{Lem-connect} is that the coefficients of the linear combination \eqref{Q2p} do not depend on variable $t$.
\begin{proof}%
From \eqref{p2F3}, we see that each polynomial
$$p_n(y;t)=(-1)^n(\A+\C,\B+\C)_n \, _3F_2\left(-n,\C-t-\sqrt{-y},\C-t+\sqrt{-y};\A+\C,\B+\C;1\right)$$
is a linear combination of  the linearly independent monic polynomials
\begin{equation}
  \label{Pochhamers}
  (\C-t-\sqrt{-y},\C-t+\sqrt{-y})_k=\prod_{j=0}^{k-1} ((\C-t+j)^2+y), \quad k=0,1,\dots,n
\end{equation}
 in variable $y$. From  \eqref{3F2} we see that the  coefficients  of the linear combination,
 $$
 (-1)^n\frac{(\A+\C,\B+\C)_n(-n)_k}{k!(\A+\C,\B+\C)_k}, \quad k=0,1,\dots, n,
 $$
  do not depend on $t$.  By Assumption \ref{A1}, either $\B+\C\geq \A+\C>0$ or $\im(\A)\ne 0$, so $(\A+\C,\B+\C)_n\ne 0$ and hence the coefficients are non-zero.
 This means that polynomials \eqref{Pochhamers} can be written as linear combinations of polynomials $p_0(y;t),p_1(y;t),\dots,p_n(y;t)$ with the coefficients that do not depend on $t$.
Since
\begin{multline}\label{3F2Q}
   Q_n(y;x,t,s)=\;
   _3F_2\left(-n,\C-t-\sqrt{-y},\C-t+\sqrt{-y};\C-s-\sqrt{-x},\C-s+\sqrt{-x};1\right)
   \\
   \cdot (-1)^n(\C-s-\sqrt{-x},\C-s+\sqrt{-x})_n,
\end{multline}
 from \eqref{3F2} we see that $Q_n(y;x,t,s)$  is a linear combination of the monic polynomials \eqref{Pochhamers}, with
   the  coefficients
   $$
   (-1)^n\frac{(-n)_k(\C-s-\sqrt{-x},\C-s+\sqrt{-x})_n}{k!(\C-s-\sqrt{-x},\C-s+\sqrt{-x})_k },\quad
   k=0,1,\dots,n
   $$
   that depend only on $x,s$, but not on $t$. Combining these two observations together,  we get
   \eqref{Q2p}.
   The fact that $b_{n,n}(x,s)=1$ is just a consequence of the fact that both sets of  polynomials are monic.

\end{proof}

We  note that $Q_n(x;x,s,s)=0$ for $n\geq 1$.  To see this, as in \cite{Bryc-Wesolowski-08},  we  use
the three step recurrence relation \eqref{Recursion} to verify that  $Q_1(x;x,s,s)=0$,  $Q_2(x;x,s,s)=0$, and then automatically \eqref{Recursion}  implies that $Q_n(x;x,s,s)=0$ for all $n\geq 3$.

From  \eqref{Q2p}, applied to  each term in $Q_n(y;x,t,s)= Q_n(y;x,t,s)-Q_n(x;x,s,s)$, after canceling the first term with $p_0(y;t)=1$, we get
\begin{equation}
  \label{Q2p+}
     Q_n(y;x,t,s)=\sum_{k=1}^n b_{n,k}(x,s)(p_k(y;t)-p_k(x;s)), \quad n=1,2,\dots
\end{equation}
Formula \eqref{Q2p+} immediately implies that  $\{p_n(x;s)\}$ are in fact orthogonal martingale polynomials for the Markov process $(\T_s)$. This implication is known, see \cite[Proposition 3.6]{Bryc-Wesolowski-08} but we include proof for completeness.
  \begin{proposition}
    \label{L-mart}  If $x\in E_s$, then
    \begin{equation}\label{proj-mart}
        \int_\r p_n(y;t)\mathfrak p_{s,t}(x,\d y)=p_n(x;s).
    \end{equation}
  \end{proposition}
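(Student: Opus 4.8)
The plan is to deduce \eqref{proj-mart} directly from the connection formula \eqref{Q2p+} together with the orthogonality of the polynomials $Q_n(y;x,t,s)$ with respect to the measure $\mathfrak p_{s,t}(x,\d y)$. First I would fix $x\in E_s$ and $s<t$, so that by Proposition \ref{Prop-trans} the measure $\mathfrak p_{s,t}(x,\d y)$ is a genuine probability measure supported on $E_t$, and by the remarks following \eqref{Q-poly} the polynomials $\{Q_n(\cdot;x,t,s)\}$ are the monic orthogonal polynomials for this measure. In particular $\int_\r Q_n(y;x,t,s)\,\mathfrak p_{s,t}(x,\d y)=0$ for all $n\geq 1$ (for $n$ exceeding the number of atoms this is the last clause of Theorem \ref{Thm-Favard}, i.e.\ the integral against $p_m$ with $m\geq N$ still vanishes).

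Next I would integrate \eqref{Q2p+} against $\mathfrak p_{s,t}(x,\d y)$. The left-hand side gives $0$. On the right-hand side, the terms $b_{n,k}(x,s)p_k(x;s)$ are constants in $y$ and pull out, contributing $-\sum_{k=1}^n b_{n,k}(x,s)p_k(x;s)$, while the remaining terms contribute $\sum_{k=1}^n b_{n,k}(x,s)\int_\r p_k(y;t)\,\mathfrak p_{s,t}(x,\d y)$. Writing $I_k:=\int_\r p_k(y;t)\,\mathfrak p_{s,t}(x,\d y)$, we obtain for every $n\geq 1$ the identity $\sum_{k=1}^n b_{n,k}(x,s)\big(I_k-p_k(x;s)\big)=0$. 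Since $b_{n,n}(x,s)=1$, this is a unitriangular linear system in the unknowns $I_k-p_k(x;s)$, which forces $I_k=p_k(x;s)$ for all $k\geq 1$ by induction on $n$: the $n=1$ equation gives $I_1=p_1(x;s)$, and assuming $I_k=p_k(x;s)$ for $k<n$ the $n$-th equation collapses to $I_n-p_n(x;s)=0$. The case $n=0$ is trivial since $p_0\equiv 1$ and $\mathfrak p_{s,t}(x,\cdot)$ is a probability measure.

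The only point requiring a little care — and the main (mild) obstacle — is justifying that $\int Q_n(y;x,t,s)\,\mathfrak p_{s,t}(x,\d y)=0$ uniformly across all the cases of Proposition \ref{Prop-trans}, including the degenerate case $x=-(\C-s)^2$ where $\mathfrak p_{s,t}(x,\d y)=\delta_{-(\C-t)^2}(\d y)$ and the purely atomic cases with finitely many atoms. In the degenerate case one checks directly that $Q_n(-(\C-t)^2;x,t,s)=0$ for $n\geq 1$: indeed, from \eqref{3F2Q} the monic factor $(\C-s-\sqrt{-x},\C-s+\sqrt{-x})_n$ vanishes already at $n=1$ because $\C-s-\sqrt{-x}=0$ when $x=-(\C-s)^2$, and then the three-term recurrence \eqref{Recursion} propagates the zero to all $n\geq 1$ — this is exactly the mechanism used just above the proposition to see $Q_n(x;x,s,s)=0$. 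In the finitely-atomic cases, $Q_n$ for $n$ at most the number of atoms integrates to zero by orthogonality \eqref{FVD-ortho}, and for larger $n$ the polynomial $Q_n$ vanishes identically on the support, so again the integral is zero. With this in hand the unitriangular argument above goes through verbatim, and \eqref{proj-mart} follows; finally, one notes that $(x\in E_s)\Rightarrow$ the right side $p_n(x;s)$ is well defined and, by Proposition \ref{P-Koekoek} together with the tower property over $\mathfrak p_s$, these identities are precisely the martingale property $\E[p_n(\T_t;t)\mid \T_s]=p_n(\T_s;s)$ for the process $(\T_s)$.
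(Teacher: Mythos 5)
Your argument is correct and is essentially the paper's own proof: both integrate the connection formula \eqref{Q2p+} against $\mathfrak p_{s,t}(x,\d y)$, use $\int_\r Q_n(y;x,t,s)\,\mathfrak p_{s,t}(x,\d y)=0$ for $n\geq 1$ (which Theorem \ref{Thm-Favard} already supplies uniformly, including the degenerate and finitely-atomic cases you check by hand), and then induct on $n$ via $b_{n,n}(x,s)=1$ — your ``unitriangular system'' is the same induction. No substantive difference.
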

\begin{proof} The proof is by induction on $n$.
Trivially, \eqref{proj-mart} holds for $n=0$, as  $p_0(x;s)=1$. For the induction step, suppose that the martingale property \eqref{proj-mart} holds for   polynomials
  $p_k(x;t)$  with $k\leq n-1$, where  $n\geq 1$. Since  for $x\in E_s$
      polynomials $Q_n(y;x,t,s)$ and $ Q_0(y;x,t,s)\equiv 1$  are orthogonal,
 from \eqref{Q2p+}
  we get
   \begin{multline*}
   0=\int_\r Q_{n}(y;x,t,s)\mathfrak p_{s,t}(x,\d y)
    \\
    =\sum_{k=1}^{n-1} b_{n,k}(x,s)\int_\r (p_k(y;t)-p_k(x;s))\mathfrak p_{s,t}(x,\d y)+b_{n,n}(x,s)\int_\r (p_n(y;t)-p_n(x;s))\mathfrak p_{s,t}(x,\d y)
   \\ \stackrel{\eqref{proj-mart}}{=} 0+\int_\r (p_n(y;t)-p_n(x;s))\mathfrak p_{s,t}(x,\d y).
 \end{multline*}
where we used   the induction assumption for all the terms with $k\leq n-1$  and    we used $b_{n,n}(x,s)=1$   in the last term. Thus
 $\int_\r (p_n(y;t)-p_n(x;s))\mathfrak p_{s,t}(x,\d y)=0$, which ends the proof by induction.

\end{proof}
We remark that  for $x\not\in\E_s$, polynomials $Q_n(y;x,t,s)$ do not have to be orthogonal,   so \eqref{proj-mart} may fail if $x\not\in E_s$.

\begin{proof}[Proof of Theorem \ref{Thm:Chapman}]
We note that \eqref{Chap-Kolm-1} holds by default  if $x\not\in E_s$,  as then the process is deterministic, see Remark \ref{Rem:det}.

To prove that \eqref{Chap-Kolm-1} holds for $x\in E_s$, following \cite[page 1242]{Bryc-Wesolowski-08} we introduce
  an auxiliary probability  measure
  $$\mu(U)=\int_\r \mathfrak p_{s,t}(x,\d y)\mathfrak p_{t,u}(y,U)=\int_{E_t}\mathfrak p_{s,t}(x,\d y)\mathfrak p_{t,u}(y,U),$$
   as $\mathfrak p_{s,t}(x,E_t)=1$.
Then  for $n=1,\dots$, we have
  \begin{equation}
    \label{mu*}
    \int_\r Q_{n}(z;x,u,s) \mu(\d z) =0.
  \end{equation}
  Indeed,
 \begin{multline*}
    \int_\r Q_{n}(z;x,u,s) \mu(\d z)= \int_\r Q_{n}(z;x,u,s) \int_\r \mathfrak p_{s,t}(x,\d y) \mathfrak p_{t,u}(y,\d z)
    \\= \int_\r \mathfrak p_{s,t}(x,\d y) \int_\r Q_{n}(z;x,u,s)\mathfrak p_{t,u}(y,\d z)
    \\\stackrel{\eqref{Q2p+}}{=}\sum_{k=1}^n b_{n,k}(x,s) \int_\r \mathfrak p_{s,t}(x,\d y) \int_\r(p_k(z;u)-p_k(x;s))\mathfrak p_{t,u}(y,\d z)
     \\\stackrel{\eqref{P(Et)=1}}{=}\sum_{k=1}^n b_{n,k}(x,s) \int_{E_t} \mathfrak p_{s,t}(x,\d y) \int_\r(p_k(z;u)-p_k(x;s))\mathfrak p_{t,u}(y,\d z)
    \\ \stackrel{\eqref{proj-mart}}{=} \sum_{k=1}^n b_{n,k}(x,s) \int_\r \mathfrak p_{s,t}(x,\d y)  (p_k(y;t)-p_k(x;s))  \stackrel{\eqref{proj-mart}}{=}0,
 \end{multline*}
 where we used martingale property first at $y\in E_t$, and then again at $x\in E_s$.

  It is well known, see e.g. \cite[Exercise 2.5]{Ismail-book} that condition \eqref{mu*} implies that the moments of $\mu(\d z)$   are the same as the moments of the orthogonality measure $\mathfrak p_{s,u}(x,\d z)$ for the  polynomials $Q_{n}(z;x,u,s)$, $n\geq0$.  By Lemma \ref{Lem:unique}, we get   $\mu(U)= \mathfrak p_{s,u}(x,U)$ for all Borel sets $U$, proving  \eqref{Chap-Kolm-1}. %

  To prove that \eqref{Chap-Kolm-2} holds, we follow a similar plan. Recycling the same letter, we introduce another
 auxiliary  probability  measure
  $$\mu(U)=  \int_\r \mathfrak p_s(\d x\mid \A,\B,\C) \mathfrak p_{s,t}(x,U)= \int_{E_s} \mathfrak p_s(\d x\mid \A,\B,\C) \mathfrak p_{s,t}(x,U).$$
 For $n=1,2,\dots$ , we use   martingale property \eqref{proj-mart} on $E_s$  to compute
 \begin{multline*}
    \int_\r p_n (y;t)\mu(\d y)=\int_\r p_n (y;t)\int_\r \mathfrak p_s(\d x\mid \A,\B,\C) \mathfrak p_{s,t}(x,\d y)
    \\  =\int_{E_s} \mathfrak p_s(\d x\mid \A,\B,\C)\int_\r  p_n (y;t)\mathfrak p_{s,t}(x,\d y) \stackrel{\eqref{proj-mart}}{=}
    \int_\r p_n (x;s)\mathfrak p_s(\d x\mid \A,\B,\C)=0,
 \end{multline*}
where in the last step we used orthogonality of polynomials $p_n(x;s)$ and $p_0(x;s)\equiv1$ with respect to $\mathfrak p_s(\d x\mid \A,\B,\C)$. Since polynomials
 $\{p_n(y;s)\}$ are orthogonal with respect to probability measure $\mathfrak p_t(\d y\mid \A,\B,\C)$,
 by the uniqueness of the moment problem, $\mu(\d y)=\mathfrak p_t(\d y\mid \A,\B,\C)$, proving \eqref{Chap-Kolm-2}.

 (In both proofs, the interchange of the order of integrals is allowed, as the measures have all moments.)
\end{proof}

\section{A family of $\sigma$-finite  entrance laws}\label{Sec:CDH-inf}
In this section we consider  real parameters $\A,\B,\C$, under   Assumption \ref{A1}(a). (Parameter $\B$ will appear only in the proof.)

  For $-\infty<t<\infty$, we  introduce a family of  $\sigma$-finite measures
$\mathfrak p_t(\d x)$,  which depend on parameters $\A,\C$.  These measures were motivated by Ref. \cite[Definition 7.8]{CorwinKnizel2021}.
\begin{definition} For real $t,\A,\C$ with $\A+\C>0$, consider a family of positive $\sigma$-finite measures
  \begin{multline}
    \label{their-nu}
   \mathfrak p_{t}(\d x)= \mathfrak p_{t}\topp c(\d x)+ \mathfrak p_{t}\topp d(\d x) :=
    \frac{1}{
    4\pi} \frac{|\Gamma(t+\A+\i \sqrt{x},\C-t+\i \sqrt{x} )|^2}{\sqrt{x}|\Gamma(2\i \sqrt{x})|^2}1_{x>0}\d x
    \\+
    \sum_{\{j:\; j+\A+t<0\}} m_j(t) \delta_{-(\A+j+t)^2}(\d x)+
    \sum_{\{k:\; \C-t+k<0\}}  M_k(t) \delta_{-(\C-t+k+t)^2}(\d x)
  \end{multline}
  with discrete masses given by
    \begin{equation}
    \label{their-atoms}
    m_j(t)
    =\frac{(\A+j+t)}{j! (\A+t)}\cdot \frac{ \Gamma(\A+\C,\C-\A-2 t)}{\Gamma (-2 (\A+t)) }\cdot\frac{ (\A+\C,2 (\A+t))_j }{ (\A-\C+2 t+1)_j},\; j\in \ZZ\cap[0,-\A-t),
  \end{equation}
    \begin{equation}
    \label{their-other-atoms}
    M_k(t)
    = \frac{(\C+k-t)}{k! (\C-t)}\cdot\frac{\Gamma (\A-\C+2 t,\A+\C) }{\Gamma (2 (t-\C)) }
      \cdot
     \frac{ (\A+\C ,2 (\C-t))_k}{ (-\A+\C-2 t+1)_k }, \; k\in\ZZ\cap[0,t-\C).
  \end{equation}
  \end{definition}

Our goal is to show that measures $\mathfrak p_t(\d x)$ are the
 entrance laws (\cite[Sect. 10]{dynkin1978sufficient}) for the family of transition probabilities $\mathfrak p_{s,t}(x,\d y)$ defined by \eqref{P_st-3par}, i.e., that for all Borel sets $U$ we have %
 \begin{equation}
  \label{ps-invar}
   \int_\r \mathfrak p_{s}(\d x)\mathfrak p_{s,t}(x,U) = \mathfrak p_{t}(U).
\end{equation}
The following extends \cite[Lemma 7.11]{CorwinKnizel2021}    to a larger time domain.
\begin{theorem}\label{Prop-inv}
Suppose $\A,\C$  are real and   $\A+\C>0$. Then for $-\infty<s<t<\infty$, and all  Borel sets $U$,
the entrance law formula \eqref{ps-invar} holds.

\end{theorem}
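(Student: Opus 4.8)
### Proof proposal

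\textbf{Overall strategy.}
The plan is to reduce the $\sigma$-finite entrance-law identity \eqref{ps-invar} to the already-established Chapman--Kolmogorov identity \eqref{Chap-Kolm-2} by exhibiting $\mathfrak p_t(\d x)$ as a limit of the (suitably renormalized) probability marginals $\mathfrak p_t(\d x\mid\A,\B,\C)$ from Proposition \ref{P-Koekoek} as $\B\to\infty$. Concretely, I would first show that there is a normalizing function $c_\B(t)$ such that, as $\B\to+\infty$,
\begin{equation*}
  c_\B(t)\,\mathfrak p_t(\d x\mid\A,\B,\C)\;\longrightarrow\;\mathfrak p_t(\d x)
\end{equation*}
in the sense of vague convergence of $\sigma$-finite measures on $\r$ (equivalently, convergence when integrated against any continuous compactly supported test function, and also against bounded continuous functions on sets bounded away from $+\infty$). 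Because $\mathfrak p_{s,t}(x,\d y)$ does not involve $\B$ at all, one can then pass to the limit on both sides of \eqref{Chap-Kolm-2} after multiplying through by $c_\B(s)$ and using $c_\B(s)=c_\B(t)(1+o(1))$ -- or, more cleanly, by choosing the normalization so that $c_\B(s)/c_\B(t)\to 1$ for all fixed $s<t$. This is exactly why Proposition \ref{P-Koekoek} was stated with the $\B$-dependent factors isolated at the tail of each formula.

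\textbf{Key steps, in order.}
First, using Stirling's formula on the factor
$\Gamma(\B+\C)^{-1}\Gamma(\B-\A)$, resp. $\Gamma(\A+\B+2t)^{-1}\Gamma(\B-\C+2t)$, and on the Pochhammer ratios $(\A+\B+2t)_k/(\A-\B+1)_k$, resp. $(\B+\C)_k/(-\B+\C-2t+1)_k$, isolate the leading power of $\B$ in each of the three components of $\mathfrak p_t(\d x\mid\A,\B,\C)$: the continuous density \eqref{p-c}, the atoms \eqref{pd-m}, and the atoms \eqref{pd-M}. One checks that all three components scale by the \emph{same} power of $\B$, namely the power coming from $1/\Gamma(\A+\B+2t)$ times the surviving $\B$-dependence; define $c_\B(t)$ to be the reciprocal of that common leading coefficient. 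Second, verify that in the limit the continuous density \eqref{p-c} loses its $\Gamma(\B+t+\i\sqrt x)$ factor and its $1/\Gamma(\A+\B+2t)$ prefactor in matching fashion, leaving exactly the density in \eqref{their-nu}; likewise $m_k(t)$ of \eqref{pd-m} tends to $m_j(t)$ of \eqref{their-atoms} and $M_k(t)$ of \eqref{pd-M} tends to $M_k(t)$ of \eqref{their-other-atoms}. Third, confirm $c_\B(s)/c_\B(t)\to 1$ for fixed $s<t$ (the $\B$-power is the same, only lower-order corrections differ). Fourth, write \eqref{Chap-Kolm-2} for the parameter triple $(\A,\B,\C)$, multiply by $c_\B(s)$, and take $\B\to\infty$: the right side converges to $\mathfrak p_t(U)$ and, on the left, $c_\B(s)\mathfrak p_s(\d x\mid\A,\B,\C)\to\mathfrak p_s(\d x)$, so one needs to justify interchanging the limit with $\int \cdot\,\mathfrak p_{s,t}(x,U)$. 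Finally, deduce \eqref{ps-invar} for arbitrary Borel $U$; since both sides are $\sigma$-finite measures in $U$ agreeing on a convergence-determining class, this extends from test functions to all Borel sets.

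\textbf{Main obstacle.}
The delicate point is the interchange of $\lim_{\B\to\infty}$ and the $x$-integration against the kernel $\mathfrak p_{s,t}(x,\cdot)$ in the limiting passage of \eqref{Chap-Kolm-2}: vague convergence of the $\sigma$-finite measures $c_\B(s)\mathfrak p_s(\d x\mid\A,\B,\C)$ alone does not license integrating a function like $x\mapsto\mathfrak p_{s,t}(x,U)$ that is bounded but not compactly supported, and the measures carry mass escaping to $x\to+\infty$. I would handle this by (i) establishing a \emph{uniform} (in $\B$ large) domination: a $\sigma$-finite measure $\Lambda(\d x)$ with $c_\B(s)\mathfrak p_s(\d x\mid\A,\B,\C)\le\Lambda(\d x)$ and $\int\Lambda(\d x)\,\mathfrak p_{s,t}(x,\r\setminus K)$ small for $K$ large, using the explicit Stirling asymptotics to control the tail $x\to\infty$; and (ii) splitting the integral over a large compact $K\subset\r$, where vague convergence applies, plus a uniformly small tail. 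A convenient alternative that sidesteps domination entirely is to run the limiting argument first on polynomial test functions: both $\mathfrak p_t(\d x)$ and its approximants have all moments (after renormalization the moments converge, by termwise Stirling), and the martingale-polynomial computation in the proof of Theorem \ref{Thm:Chapman} shows $\int p_n(y;t)\,\mu(\d y)=\int p_n(x;s)\,\mathfrak p_s(\d x)$; then uniqueness of the $\sigma$-finite moment problem for this measure (which, like Lemma \ref{Lem:unique}, follows from the Hamburger criterion applied to the renormalized orthonormal polynomials) upgrades equality of moments to equality of measures, giving \eqref{ps-invar}. Either route works; the moment route is cleaner because it reuses the machinery already developed for $(\T_t)$, and I would present that one.
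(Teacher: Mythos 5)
Your overall strategy --- renormalize $\mathfrak p_t(\d x\mid\A,\B,\C)$ by the constant $\Gamma(\A+\C)\Gamma(\B+\C)\Gamma(\A+\B+2t)/\Gamma(\B+t)^2$, let $\B\to\infty$, and pass to the limit in \eqref{Chap-Kolm-2} --- is exactly the paper's, and your observations that the kernel $\mathfrak p_{s,t}$ is $\B$-free and that $c_\B(s)/c_\B(t)\to1$ are correct and needed. The problem is your resolution of the step you yourself flag as the main obstacle. The route you say you would present (the moment route) does not work: by Stirling, the density in \eqref{their-nu} behaves like $x^{\A+\C-1}$ as $x\to\infty$, so $\mathfrak p_t(\d x)$ has infinite total mass and \emph{no finite moments whatsoever}. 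Polynomials are not integrable against it, the renormalized moments of $c_\B(t)\,\mathfrak p_t(\d x\mid\A,\B,\C)$ diverge rather than converge (indeed $c_\B(t)\sim\B^{\A+\C}\to\infty$), the identity $\int p_n(y;t)\,\mu(\d y)=\int p_n(x;s)\,\mathfrak p_s(\d x)$ is meaningless, and there is no ``$\sigma$-finite moment problem'' to invoke. Your alternative route (i) also stumbles on the same feature: since the limit measure is infinite, no dominating $\Lambda$ can make $\int_{\r\setminus K}\Lambda(\d x)\,\mathfrak p_{s,t}(x,U)$ uniformly small for general Borel $U$ --- for instance both sides of \eqref{ps-invar} equal $+\infty$ when $U=(0,\infty)$ --- so the compact-plus-small-tail decomposition fails as stated.

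The missing idea, which is what the paper uses, is that the convergence of the renormalized continuous densities is \emph{monotone increasing} in $\B$: by the product formula $|\Gamma(x)/\Gamma(x+\i y)|^2=\prod_k\bigl(1+y^2/(x+k)^2\bigr)$, the renormalized density equals the limiting density of \eqref{their-nu} times $\prod_{k\ge0}\bigl(1+\tfrac{x}{(\B+t+k)^2}\bigr)^{-1}$, which increases to $1$ as $\B\to\infty$. The monotone convergence theorem then licenses the interchange of $\lim_{\B\to\infty}$ with $\int\cdot\,\mathfrak p_{s,t}(x,U)$ with no domination, no compactness argument, and no issue when either side of \eqref{ps-invar} is $+\infty$; the discrete components are finite sums with $\B$-independent atom locations, so they pass to the limit trivially. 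Without this monotonicity observation (or some substitute for it), neither of your two proposed routes closes the argument.
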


 Theorem \ref{Prop-inv} follows from the explicit formulas for $\mathfrak p_t(\d x\mid \A,\B,\C)$ in Proposition \ref{P-Koekoek}. The idea of proof is that for fixed $t>-(\A+\B)/2$, we have
$$\frac{\Gamma(\A+\C)\Gamma (\B+\C) \Gamma (\A+\B+2 t)}{\Gamma (\B+t)^2} \mathfrak p_t(\d x\mid \A,\B,\C)
\to \mathfrak p_t(\d x)$$
 as $\B\to \infty$, and that this limit preserves   equation \eqref{Chap-Kolm-2}. To make this heuristics precise, we need to analyze separately  the continuous and  the discrete components.

\begin{proof}[Proof of Theorem \ref{Prop-inv}]
We first consider  the continuous component. From \eqref{p-c} we see that the density of
$$\frac{\Gamma(\A+\C)\Gamma (\B+\C) \Gamma (\A+\B+2 t)}{\Gamma (\B+t)^2} \mathfrak p_t\topp c(\d x\mid \A,\B,\C)$$ is
\begin{multline*}
   \frac{|\Gamma(\A+t+\i \sqrt{x},\C-t+\i \sqrt{x} )|^2}{4 \pi   \sqrt{x}|\Gamma(2 \i \sqrt{x}|^2)} \cdot \frac{|\Gamma(\B+t+\i  \sqrt{x} )|^2}{\Gamma (\B+t)^2 }
 \\ =   \frac{|\Gamma(\A+t+\i \sqrt{x},\C-t+\i \sqrt{x} )|^2}{4 \pi   \sqrt{x}|\Gamma(2 \i \sqrt{x}|^2)} \cdot \prod_{k=0}^\infty \frac{1}{1+\frac{x}{(\B+t+k)^2}}\nearrow  \frac{|\Gamma(\A+t+\i \sqrt{x},\C-t+\i \sqrt{x} )|^2}{4 \pi   \sqrt{x}|\Gamma(2 \i \sqrt{x}|^2)} \mbox{ as $\B\to\infty$ }.
\end{multline*}
(Here we used $|\Gamma(x)/\Gamma(x+iy)|^2=\prod_k (1+y^2/(x+k)^2)$, see \cite[5.8.3]{NIST2010}.) The monotone convergence holds, because for $x>0$ we have
\begin{multline*}
   0<\sum_{k=0}^\infty \log\left(1+\frac{x}{(\B+t+k)^2}\right)<\sum_{k=0}^\infty \frac{x}{(\B+t+k)^2}<
\frac{x}{(\B+t)^2}+ \sum_{k=1}^\infty \frac{x}{(\B+t+k)(\B+t+k-1)} \\=\frac{x}{(\B+t)^2}+\frac{x}{\B+t} \to 0  \mbox{ as $\B\to\infty$ }.
\end{multline*}

 Next we consider the discrete components. We note that  the locations of atoms do not depend on parameter $\B$.
We compute the limit of masses of the atoms. For atoms in \eqref{pd-m}, as $\B\to\infty$ we have
\begin{multline*}
  \frac{\Gamma(\A+\C)\Gamma (\B+\C) \Gamma (\A+\B+2 t)}{\Gamma (\B+t)^2}m_k(t)
  \\=     \frac{ \Gamma(\A+\C)\Gamma (-\A+\C-2 t)}{\Gamma (-2 (\A+t)) }\cdot\frac{(\A+k+t) (\A+\C)_k (2 (\A+t))_k }{k! (\A+t) (\A-\C+2 t+1)_k}(-1)^k \frac{\Gamma (\B-\A)\Gamma (\A+\B+2 t)}{\Gamma (\B+t)^2}\cdot \frac{(\A+\B+2 t)_k}{  (\A-\B+1)_k}
\\ \to  \frac{ \Gamma(\A+\C)\Gamma (-\A+\C-2 t)}{\Gamma (-2 (\A+t)) }\cdot\frac{(\A+k+t) (\A+\C)_k (2 (\A+t))_k }{k! (\A+t) (\A-\C+2 t+1)_k}.%
\end{multline*}
This gives \eqref{their-atoms}.
For atoms in \eqref{pd-M}, as $\B\to\infty$ we have
\begin{multline*}
  \frac{\Gamma(\A+\C)\Gamma (\B+\C) \Gamma (\A+\B+2 t)}{\Gamma (\B+t)^2}M_k(t)
  \\= \frac{(\C+k-t)}{k! (\C-t)}\cdot\frac{\Gamma (\A-\C+2 t,\A+\C) }{\Gamma (2 (t-\C)) }
      \cdot
     \frac{ (\A+\C ,2 (\C-t))_k}{ (-\A+\C-2 t+1)_k }(-1)^k\frac{\Gamma (\B-\C+2 t)\Gamma (\B+\C)}{\Gamma (\B+t)^2 }\cdot \frac{(\B+\C)_k}{(-\B+\C-2 t+1)_k}
     \\ \to \frac{(\C+k-t)}{k! (\C-t)}\cdot\frac{\Gamma (\A-\C+2 t,\A+\C) }{\Gamma (2 (t-\C)) }
      \cdot
     \frac{ (\A+\C ,2 (\C-t))_k}{ (-\A+\C-2 t+1)_k }.%
 \end{multline*}
This gives \eqref{their-other-atoms}.

We can now prove \eqref{ps-invar}.
Fix a  Borel set $U$ and $s<t$. We will use Theorem \ref{Thm:Chapman}(ii), i.e., formula \eqref{Chap-Kolm-2}. Since the convergence of the densities is monotone, the locations of atoms do not depend on $\B$, and there are only finitely many atoms,
by the monotone convergence theorem for the continuous component, and  by convergence of the finite sums for the discrete component, we have
\begin{multline*}
   \int_\r \mathfrak p_{s}(\d x)\mathfrak p_{s,t}(x,U) =\int_\r \mathfrak p_{s}\topp c(\d x)\mathfrak p_{s,t}(x,U)+\int_\r \mathfrak p_{s}\topp d(\d x)\mathfrak p_{s,t}(x,U)
   \\=
   \int_\r   \lim_{\B\to\infty}  \mathfrak p_s\topp c(\d x\mid \A,\B,\C)\mathfrak p_{s,t}(x,U)+\int_\r    \lim_{\B\to\infty}\mathfrak p_s\topp d(\d x\mid \A,\B,\C)\mathfrak p_{s,t}(x,U)
   \\ = \lim_{\B\to \infty} \left( \int_\r   \mathfrak p_t\topp c(\d x\mid \A,\B,\C)\mathfrak p_{s,t}(x,U)+\int_\r  \mathfrak p_t\topp d(\d x\mid \A,\B,\C)\mathfrak p_{s,t}(x,U)\right)
   \\= \lim_{\B\to \infty} \int_\r   \mathfrak p_s(\d x\mid \A,\B,\C)\mathfrak p_{s,t}(x,U) \stackrel{ \eqref{Chap-Kolm-2}}{=}\lim_{\B\to \infty}  \mathfrak p_t(U\mid \A,\B,\C)=
    \mathfrak p_{t}(U).
\end{multline*}
\end{proof}
Formulas \eqref{Chap-Kolm-1} and \eqref{ps-invar} extend the formulas in \cite[Lemma 7.11]{CorwinKnizel2021} to a larger time domain and hence extend  the continuous dual Hahn  process $(\TT_s) $ to $s\in\r$.
\begin{definition}\label{Def 2}
  A continuous dual Hahn process $(\TT_s)_{-\infty<s<\infty}$ with real parameters $\A,\C$ such that $\A+\C>0$ is
  a family of  Markov    transition probabilities \eqref{P_st-3par} with  the family of $\sigma$-finite entrance laws \eqref{their-nu}.
\end{definition}
Since $(\TT_s)_{-\infty<s<\infty}$ is not a "stochastic process" in the usual probabilistic sense, we remark that
for any real $\tau$ and a measurable function $\varphi>0$ such that
$$
\mathfrak C:=\int_\r \varphi(x) \mathfrak p_\tau(\d x)<\infty,
$$
the family of entrance laws \eqref{their-nu} can be used to construct a Markov process $(X_t)$ on $(-\infty,\tau]$ with   probability measures as the marginal laws.
Versions of such constructions are well known; we follow \cite[Section 10]{dynkin1978sufficient}.
It is easy to see that for $t_0<t_1<\dots<t_n<t_{n+1}=\tau$, the consistent family of the joint  probability distributions for $(X_{t_0},X_{t_1},\dots, X_{t_n}, X_{\tau}) \in \r^{n+2}$ is
$$
\frac{1}{\mathfrak C}\mathfrak p_{t_0}(\d x_0) \mathfrak p_{t_0,t_1}(x_0,\d x_1)\dots \mathfrak p_{t_{n-1},t_n}(x_{n-1},\d x_n) p_{t_{n},\tau}(x_{n},\d x_{n+1})\varphi(x_{n+1}).
$$
Thus Kolmogorov's extension theorem  guarantees existence, and    Markov property holds
  with the initial law
$$
P(X_{t_0} = \d x)=\frac{1}{\mathfrak C} h_{t_0}(x)\mathfrak p_{t_0}(\d x)
$$
and with transition probabilities
$$
P(X_t=\d y|X_s=x)=\frac{1}{h_s(x)} \mathfrak p_{s,t}(x,\d y) h_t(y), \; t_0\leq s<t\leq \tau, x\in\r,
$$
where  for $t<\tau$ we define %
$$
h_t(x):=\int_\r \varphi(y)  \mathfrak p_{t,\tau}(x,\d y)
$$
with $h_\tau(x):=\varphi(x)$.
Note that $\varphi>0$ implies $h_t(x)>0$ for all $x\in\r$; in particular, if $x\not\in E_s$, then $h_t(x)=\varphi(-(\C-t)^2)$.

\section{Characterization by the conditional means and variances}\label{Sec:QH}
For a process $(X_t)_{t\geq 0}$ and $s<u$  consider the two-sided sigma fields
$\mathcal{F}_{ s, u}$   generated by $\{X_r: r\in (0,s]\cup[u,\infty)\}$. %
We will also use the past $\sigma$-fields  $\mathcal{F}_{ s}$   generated by $\{X_r: r\in (0,s]\}$.

The following is a characteristic property of the Markov process introduced in Definition \ref{Def-T}. %

\begin{theorem}
  \label{T-char}
  Let $(X_t)_{t\geq 0}$ be a square-integrable process   such that
  for all $t,s\geq 0$,
\begin{equation}\label{EQ: cov}
\E[X_t]=0,\: \E[X_sX_t]=\min\{t,s\}.
\end{equation}
Assume that for $t>0$, random variable $X_t$ has infinite support.
Then the following conditions are equivalent:
\begin{enumerate}[(i)]
  \item  For all $0\leq s<t<u$,
\begin{equation}
\label{EQ: LR} \E[{X_t}|{\mathcal{F}_{ s, u}}]=\frac{u-t}{u-s}
X_s+\frac{t-s}{u-s} X_u,
\end{equation}
 and there exist $\eta>0,\theta>-2$ such that
  \begin{equation}\label{EQ: q-Var}
\V [X_t|\mathcal{F}_{s,u }]
= \frac{(u-t)(t-s)}{1+u- s}\left( 1+\eta \frac{uX_s-sX_u}{u-s} +\theta\frac{X_u-X_s}{u-s}
+ \frac{(X_u-X_s)^2}{(u-s)^2}
 \right).
\end{equation}
\item
The law of $(X_{t})_{t\geq 0}$ is the same as the law of the process
\begin{equation}\label{T2X}
   \frac{\T_{t/2-(\A+\B)/2}}{\sqrt{(\A+\C)(\B+\C)}}+\frac{t^2-2(\A+\B+2 \C) t+(\A-\B)^2}{4\sqrt{(\A+\C)(\B+\C)}}  ,
\end{equation}
where $(\T_t)$  is a Markov process from Definition \ref{Def-T} with parameters $\A,\B,\C$ such that $\C\in\r$ and
 \begin{equation}
   \label{eta-theta2ABC}
   \A+\C=\frac{\theta-\sqrt{\theta^2-4}}{2\eta}, \quad \B+\C=\frac{\theta+\sqrt{\theta^2-4}}{2\eta}.
 \end{equation}
\end{enumerate}

\end{theorem}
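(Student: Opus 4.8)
The plan is to recognize \eqref{EQ: LR}--\eqref{EQ: q-Var} as the defining equations of a \emph{quadratic harness} in the sense of Bryc--Weso{\l}owski, and to reduce the equivalence to the already-established theory of such processes together with the orthogonal martingale polynomial structure of $(\T_t)$ exhibited in Proposition \ref{L-mart}. The direction (ii)$\Rightarrow$(i) is the computational one: starting from the explicit parameters $\A,\B,\C$ and the affine time-and-space change \eqref{T2X}, one checks that the resulting process has mean zero, covariance $\min\{s,t\}$, and satisfies the two conditional-moment identities with the stated $\eta,\theta$. The direction (i)$\Rightarrow$(ii) is the characterization: one shows that any square-integrable process with the covariance \eqref{EQ: cov}, infinite supports, and the conditional mean/variance formulas must coincide in law with the process \eqref{T2X} for the parameters given by \eqref{eta-theta2ABC}.

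\textbf{Step 1 (normalization).} First I would apply the inverse of the affine map in \eqref{T2X} to pass from a candidate process $(X_t)$ to a process $(Y_s)$ on $s\geq \tau:=-(\A+\B)/2$, absorbing the quadratic drift and the scaling $\sqrt{(\A+\C)(\B+\C)}$. Under this substitution the linear regression \eqref{EQ: LR} and the quadratic conditional variance \eqref{EQ: q-Var}, which are affine-covariant in the appropriate sense, transform into the standard quadratic-harness identities $\E[Y_t\mid\mathcal F_{s,u}]$ linear in $Y_s,Y_u$ and $\V[Y_t\mid\mathcal F_{s,u}]$ a specific quadratic form with parameters $(\eta',\theta',\sigma',\tau',q)$; the time change $t\mapsto t/2-(\A+\B)/2$ and the space change convert $(\eta,\theta)$ with $\eta>0,\theta>-2$ into the Bryc--Matysiak--Weso{\l}owski parameters, and the relation \eqref{eta-theta2ABC} is exactly the dictionary between $(\eta,\theta)$ and the product of Pochhammer-type quantities $(\A+\C)(\B+\C)$ and their sum. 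This step is bookkeeping but must be done carefully so the five-parameter quadratic-harness family is correctly matched.

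\textbf{Step 2 (martingale polynomials pin down the law).} For (i)$\Rightarrow$(ii), I would invoke the general principle — used repeatedly in \cite{Bryc-Wesolowski-08} and \cite{Bryc:2009} — that the conditional mean and variance formulas force the existence of a family of orthogonal martingale polynomials whose three-step recurrence coefficients are uniquely determined by $\eta,\theta$ (and the covariance). Concretely: \eqref{EQ: LR} with \eqref{EQ: q-Var} recursively determines $\E[X_t^n\mid\mathcal F_s]$ for all $n$, hence all finite-dimensional moments, hence (by Lemma \ref{Lem:unique}, uniqueness of the moment problem for continuous dual Hahn orthogonality measures, transported through the affine change) all finite-dimensional \emph{distributions}. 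Comparing with the moments of the process \eqref{T2X}, which by Proposition \ref{L-mart} has $p_n(\cdot;s)$ as martingale polynomials and by Theorem \ref{Thm:Chapman} has the prescribed Markovian structure, yields equality in law. The conversely (ii)$\Rightarrow$(i) direction runs the same computation forward: from \eqref{proj-mart} and the explicit recurrence \eqref{Recursion} for $p_n(x;s)=p_n(x\mid\C-s,\A+s,\B+s)$ one extracts $\E[\T_t\mid\mathcal F_s]$ and $\E[\T_t^2\mid\mathcal F_s]$, and then — this is where bidirectional conditioning enters — one uses the reversibility/time-symmetry of the transition kernels (the parameters $\A+t,\B+t$ and $\C-t$ are symmetric under the appropriate reflection) to upgrade the one-sided conditional moments to the two-sided ones \eqref{EQ: LR}, \eqref{EQ: q-Var}, with the $\eta,\theta$ read off from \eqref{eta-theta2ABC}.

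\textbf{The main obstacle} I expect is Step 2's two-sided conditioning: the orthogonal-martingale-polynomial machinery of Proposition \ref{L-mart} gives only the \emph{forward} conditional expectations $\E[p_n(X_t;t)\mid\mathcal F_s]=p_n(X_s;s)$, whereas \eqref{EQ: LR}--\eqref{EQ: q-Var} condition on the two-sided field $\mathcal F_{s,u}$. Bridging this requires showing that the process is a \emph{reciprocal} (Markov in both time directions) process with a time-reversed family of martingale polynomials, and then combining the forward and backward polynomial identities to solve for the bidirectional conditional mean and variance; equivalently, one must verify that the quadratic-harness parameters are consistent in both directions, which for the continuous dual Hahn case amounts to the symmetry of the parameter triple $(\C-t,\A+t,\B+t)$ noted implicitly in the construction. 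Handling the degenerate start at $t=0$ (where $X_0=0$ deterministically, matching $\T_\tau=-(\A-\B)^2/4$ under \eqref{T2X}) and the infinite-support hypothesis (which rules out the purely atomic exceptional cases and guarantees $\beta_n>0$ in Favard's recurrence, so that the moment problem of Lemma \ref{Lem:unique} applies) are the remaining technical points.
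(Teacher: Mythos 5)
Your overall architecture is right, and your (i)$\Rightarrow$(ii) direction essentially matches the paper's: finiteness of all moments and the existence of orthogonal martingale polynomials whose recursion is determined by $(\eta,\theta)$ are imported from Bryc--Matysiak--Weso{\l}owski, and equality of laws then follows from moment determinacy (Lemma \ref{Lem:unique}) together with the Markov property. The gap is in (ii)$\Rightarrow$(i), exactly at the point you yourself flag as the main obstacle. Your proposed bridge --- that the process is reciprocal with a time-reversed family of martingale polynomials because of a ``symmetry of the parameter triple $(\C-t,\A+t,\B+t)$'' --- does not exist: two of the three parameters increase with $t$ and one decreases, so under $t\mapsto T-t$ the triple becomes $(\C-T+r,\A+T-r,\B+T-r)$, which is not of the form $(\C'-r,\A'+r,\B'+r)$ for any choice of $\A',\B',\C'$. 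The time-reversed process is of course still Markov, but its transition kernels are not in the continuous dual Hahn family and do not come with the polynomials $p_n$ as martingale polynomials, so there is no backward polynomial identity to combine with the forward one.

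The paper handles the two-sided conditioning without any time reversal. The device is Lemma \ref{Lem:p2m}: to prove $\E[Y_t^k\mid Y_s,Y_u]=\varphi(Y_s,Y_u)$ it suffices to check $\E[Y_t^k q_n(Y_u;u)\mid Y_s]=\E[\varphi(Y_s,Y_u)\,q_n(Y_u;u)\mid Y_s]$ for all $n$, because polynomials are dense in $L_2$ of the (moment-determinate) law of $Y_u$ and Dynkin's $\pi$--$\lambda$ lemma finishes the identification of the conditional expectation given $\mathcal F_{s,u}$. The left-hand side is then computed using only the \emph{forward} martingale property, applied twice through the intermediate time $t$, together with the vector recursion $y\,\vec q(y;t)=\vec q(y;t)\mJ(t)$; the conditional mean reduces to linearity of $\mJ(t)=\mY+t\mX$ in $t$, and the conditional variance to the commutator identity \eqref{q-comm}, $\mX\mY-\mY\mX=\tfrac12\mX^2+2\mY$. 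Neither of these two algebraic facts appears in your proposal, and without them (or the explicit mixed-type conditional densities, which the paper explains are not feasible to work with on the full time domain) the verification of \eqref{EQ: q-Var} does not go through.
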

We note that the expression in \eqref{eta-theta2ABC} should be interpreted as:
$$\frac{\theta\pm\sqrt{\theta^2-4}}{2\eta}=\begin{cases}
  \frac{\theta\pm\sqrt{\theta^2-4}}{2\eta}, & \theta\geq 2, \\ \\
   \frac{\theta\pm\i \sqrt{4-\theta^2}}{2\eta}, &-2<\theta<2 .
\end{cases}$$
Conditions $\eta>0$, $\theta>-2$ ensure that Assumption \ref{A1} holds.
\arxiv{Formulas \eqref{eta-theta2ABC} are equivalent to
\begin{equation}
  \label{abc2eta} \eta=\frac{1}{\sqrt{(\A+\C)(\B+\C)}}, \quad \theta=\frac{2\C+\A+\B}{\sqrt{(\A+\C)(\B+\C)}},
\end{equation}
compare \cite[(3.5) and (3.6)]{Bryc:2009}.
In particular, if $\A=\alpha-\i \beta$ and $\B=\alpha+\i \beta$ with $\beta\neq 0$ as in Assumption \ref{A1}(ii), then
$\theta=2 (\C+\alpha)/|\C+\alpha+\i \beta|$, so $|\theta|<2$. On the other hand, under Assumption \ref{A1}(i), $2\C+\A+\B\geq 2(\A+\C)>0$ so \eqref{abc2eta} gives $\theta>0$.

}

In the terminology of \cite{Bryc-Matysiak-Wesolowski-04}, Theorem \ref{T-char}  says  that the following conditions are equivalent:
\begin{enumerate}[(i)]
  \item  $(X_t)_{t\geq 0}$  is a quadratic harness in standard form with parameters $\sigma=0$, $\tau=1$, $\eta>0$, $\theta>-2$ and infinite support for $X_t$, $t>0$.
      \item $(X_{t})_{t\geq 0}$ is a deterministic transformation \eqref{T2X}
of the Markov process $(\T_t)$ %
with parameters \eqref{eta-theta2ABC}.
\end{enumerate}

 Theorem \ref{T-char} also relates the finite dimensional distributions   of
 Markov processes $(\T_t)$ with the same values of sums $\A+\C$ and $\B+\C$.
\begin{corollary}
  If $(\T_s)_{s\geq -(\A+\B)/2}$ and $(\T_s')_{s\geq -(\A'+\B')/2}$ are two Markov processes from Definition \ref{Def-T} %
  with parameters
  $\A,\B,\C$ and $\A',\B',\C'$ respectively such that $\A+\C=\A'+\C'$ and $\B+\C=\B'+\C'$,
  then
 the laws of  $(\T_s)$ and $(\T_s')$ differ only by a   deterministic shift of time and a deterministic shift of space:
  $$
 \mathcal{L}\left(\tfrac{(\A'-\B')^2}{4} +\TT'_{s-(\A'+\B')/2}\right)_{s\geq 0}= \mathcal{L}\left(\tfrac{(\A-\B)^2}{4}+\TT_{s-(\A+\B)/{2}}\right)_{s\geq 0}.
 $$
\end{corollary}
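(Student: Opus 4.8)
The plan is to derive the corollary from Theorem \ref{T-char}. Both triples satisfy Assumption \ref{A1}, since they come from Definition \ref{Def-T}, and inverting the relations \eqref{eta-theta2ABC} expresses the quadratic-harness parameters through the two sums only: multiplying the two relations gives $(\A+\C)(\B+\C)=1/\eta^2$ and adding them gives $(\A+\C)+(\B+\C)=\theta/\eta$, so $\eta=1/\sqrt{(\A+\C)(\B+\C)}$ and $\theta=\big((\A+\C)+(\B+\C)\big)/\sqrt{(\A+\C)(\B+\C)}$. Hence, because $\A+\C=\A'+\C'$ and $\B+\C=\B'+\C'$, the unprimed and primed triples yield the same pair $(\eta,\theta)$. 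Form the processes $(X_t)_{t\ge0}$ and $(X'_t)_{t\ge0}$ from $(\T_t)$ and $(\T'_t)$ via the deterministic transformation \eqref{T2X} with the respective parameters. For $t>0$ the index $t/2-(\A+\B)/2$ lies strictly above $\tau=-(\A+\B)/2$, so $X_t$ has infinite support by Proposition \ref{P-Koekoek}; moreover \eqref{T2X} is affine in $\T$, which makes $(X_t)$ square integrable with $\E X_t=0$ and $\E X_sX_t=\min\{s,t\}$. Thus Theorem \ref{T-char}(ii)$\Rightarrow$(i) shows that $(X_t)$ and $(X'_t)$ are both quadratic harnesses in standard form with the same parameters $\sigma=0,\tau=1,\eta,\theta$.

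Next I would use the uniqueness in law of such a quadratic harness — condition (i) pins down all joint moments of $(X_t)$ by the standard moment recursion, while each one-dimensional law is determined by its moments via Lemma \ref{Lem:unique} — to conclude $\mathcal L\big((X_t)_{t\ge0}\big)=\mathcal L\big((X'_t)_{t\ge0}\big)$. It then remains to carry this back to $(\T_t)$: solving \eqref{T2X} for $\T$ and substituting $t=2s$, a short computation that uses $\A+\B+2\C=(\A+\C)+(\B+\C)$ gives
\begin{equation*}
\tfrac{(\A-\B)^2}{4}+\T_{s-(\A+\B)/2}=\sqrt{(\A+\C)(\B+\C)}\,X_{2s}+\big((\A+\C)+(\B+\C)\big)s-s^2,\qquad s\ge0,
\end{equation*}
and the analogous identity for the primed data. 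The right-hand side depends on $(\A,\B,\C)$ only through $\A+\C$, $\B+\C$ and the law of $(X_t)$, which by the previous step agrees with that of $(X'_t)$; since the two sums coincide, the two processes $\big(\tfrac{(\A-\B)^2}{4}+\T_{s-(\A+\B)/2}\big)_{s\ge0}$ and $\big(\tfrac{(\A'-\B')^2}{4}+\T'_{s-(\A'+\B')/2}\big)_{s\ge0}$ have the same law, as claimed.

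The step I expect to be the genuine point is the uniqueness in law of the quadratic harness with prescribed standard-form parameters, which is not proved in this note; appealing to the general theory is legitimate, but one can also bypass Theorem \ref{T-char} altogether. Writing $u=\A+\C$ and $v=\B+\C$, the shifted process $Z_s:=\tfrac{(\A-\B)^2}{4}+\T_{s-(\A+\B)/2}$ is Markov, it starts at the deterministic point $Z_0=\tfrac{(\A-\B)^2}{4}+\T_{-(\A+\B)/2}=0$, its one-dimensional law at time $s$ is the pushforward by $x\mapsto x+\tfrac{(u-v)^2}{4}$ of $\nu\big(\d x\,\big|\,\tfrac{u+v}{2}-s,\ s+\tfrac{u-v}{2},\ s-\tfrac{u-v}{2}\big)$, and its transition kernels, together with the state spaces $E_{s-(\A+\B)/2}+\tfrac{(u-v)^2}{4}$ described by \eqref{Es}, collapse likewise to functions of $u$, $v$ and the time variables; by Lemma \ref{Lem:unique} these data determine all finite-dimensional distributions, so the law of $(Z_s)$ depends on the parameters only through $u$ and $v$, and the corollary follows. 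On either route the only real work is the elementary verification that every quantity entering the construction is a function of $\A+\C$, $\B+\C$ and time.
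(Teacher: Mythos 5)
Your main argument is essentially the paper's own: the corollary is stated as an immediate consequence of Theorem \ref{T-char}, the point being exactly what you identify --- the standard-form parameters $\eta=1/\sqrt{(\A+\C)(\B+\C)}$ and $\theta=\bigl((\A+\C)+(\B+\C)\bigr)/\sqrt{(\A+\C)(\B+\C)}$ depend only on the two sums, and applying (ii)$\Rightarrow$(i) to one process and (i)$\Rightarrow$(ii) to the other (which is precisely the ``uniqueness in law'' you invoke; it is supplied by the theorem itself, not by outside theory) identifies the laws of the two standardized processes, after which your inversion identity $\tfrac{(\A-\B)^2}{4}+\T_{s-(\A+\B)/2}=\sqrt{uv}\,X_{2s}+(u+v)s-s^2$ with $u=\A+\C$, $v=\B+\C$ is correct and transfers the conclusion back. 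Your second, bypass route is a genuinely different and arguably cleaner argument: since $\A-\B=u-v$ and $\C+(\A+\B)/2=(u+v)/2$, the entrance point, the marginals $\nu\bigl(\d x\mid\tfrac{u+v}{2}-s,\,s+\tfrac{u-v}{2},\,s-\tfrac{u-v}{2}\bigr)$, the kernels \eqref{P_st-3par} and the state spaces \eqref{Es} of the time-shifted process all collapse to functions of $(u,v)$ alone, so the finite-dimensional laws agree by construction (Lemma \ref{Lem:unique} guaranteeing the measures are pinned down by these data); this avoids the heavy machinery of Section 5 entirely and would prove the corollary even if one only had Sections 2--3 in hand. Both routes are sound.
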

\begin{remark}
  By inspecting formulas for the $\sigma$-finite entrance law and transition probabilities, we see that the finite-dimensional ($\sigma$-finite) joint distributions for the process $(\TT_{s+(\C-\A)/2})_{s\in\r}$ depend only on   $\A+\C$.
\end{remark}

 \subsection{ Proof of Theorem \ref{T-char}(ii)$\Rightarrow$(i)}
 For $-\A<s<t<u<\C$, this implication was verified  in \cite[Section 3]{Bryc:2009} by a direct computation with the conditional densities, and formula \eqref{T2X} comes from that paper.
 This is not a feasible approach for the time interval where the conditional laws may be of mixed type,
so we will rely on properties of polynomials, see \cite{Bryc-Matysiak-Wesolowski-04} and \cite[Section 3]{Bryc-Wesolowski-08}.
(Even with this technique, the proof still involves several long calculations that we will omit.)

From
explicit expressions for polynomials $p_1(x;t)$ and    $p_2(x;t)$, see \eqref{p-mart},
we read out the first two moments:
\begin{equation}
  \label{ETVarT}
  \E[\T_t]=\B \C + \A \B + \A\C + 2 \C t -t^2, \Var[\T_t]=(\A + \C) (\B + \C) (\A + \B + 2 t).
\end{equation}
Thus
\begin{equation}\label{ETVarT-shift}
\E[\T_{t-(\A+\B)/2}]=(\A+\B+2\C)t-t^2-(\A-\B)^2/4, \Var[\T_{t-(\A+\B)/2}]=2 t (\A+\C)(\B+\C).
\end{equation}
This shows that the first two moments of $X_{t}$ and of random variable \eqref{T2X} are the same, as claimed.

Since $Q_1(y;x,t,s)=2 \C (s-t)-s^2+t^2-x+y$ we see that
$\T_{t} - \E(\T_{t})$ is a martingale. Thus   $Cov[\T_s,\T_t]=\Var[\T_{\min\{ s,t\}}]$, and the covariance of process \eqref{T2X}   matches the covariance   in \eqref{EQ: cov}.

It remains to confirm that the first two conditional moments match. We begin by  removing the  quadratic  component from the mean, so we will be working with process
\begin{equation}
  \label{T2Y}
  Y_t:=\T_t+t^2.
\end{equation}
We have
\begin{equation}\label{LR+QV}
\E [Y_t]=\B \C + \A \B + \A\C + 2 \C t, \mbox{ and }
 \Var[Y_t]=(\A + \C) (\B + \C) (\A + \B + 2 t) >0.
\end{equation}
\arxiv{Note that formula \eqref{eta-theta2ABC} implies that $(\A+\C)(\B+\C)=1/\eta^2>0$.}

Our goal is to show that $(Y_t)$ is a (general) quadratic harness, i.e., that it has conditional mean
\begin{equation}\label{Y-cond-LR}
  \E[Y_t|Y_s,Y_u]=\frac{u-t}{u-s}Y_s+\frac{t-s}{u-s}Y_u, \; s<t<u,
\end{equation}
and conditional variance
\begin{equation}\label{Y-cond-QV}
  \Var[Y_t|Y_s,Y_u]=\frac{(u-t)(t-s)}{1+2(u-s)}\left(4\frac{uY_s-sY_u}{u-s}+\frac{(Y_u-Y_s)^2}{(u-s)^2}\right), \; s<t<u.
\end{equation}
Since $(Y_t)$ is a Markov process, the formulas for conditional moments \eqref{Y-cond-LR} and \eqref{Y-cond-QV} are of course the same if we condition with respect to the two-tail $\sigma$-field $\mathcal{F}_{s,u }$ generated by $(Y_t)$.
Once \eqref{Y-cond-LR} and \eqref{Y-cond-QV} are established, routine but cumbersome calculations based on moments \eqref{LR+QV} then verify that the deterministic transformation
\begin{equation}
  \label{Y2X}
  X_{t}=\frac{Y_{t/2-\tau}-\E(Y_{t/2-\tau})}{\sqrt{\Var(Y_{1/2-\tau})}}=
  \frac{Y_{t/2-\tau}-(\A\B+\C t)}{\sqrt{ (\A + \C) (\B + \C) }}
\end{equation}
with $\tau=(\A+\B)/2$, converts process $(Y_t)$ into a quadratic harness $(X_t)$ in standard form:  the  moments of $X_t$ become \eqref{EQ: cov}, formula \eqref{EQ: LR} is a direct consequence of \eqref{Y-cond-LR},
and a longer calculation verifies that \eqref{EQ: q-Var} follows from \eqref{Y-cond-QV}.
For the latter, we apply  \eqref{Y2X} to the left hand side of \eqref{EQ: q-Var}. From
expression
$$4\frac{uY_s-sY_u}{u-s}+\frac{(Y_u-Y_s)^2}{(u-s)^2}$$
on the right hand side of \eqref{Y-cond-QV}, we get
\begin{equation*}
   4\frac{(u/2-\tau)Y_{s/2-\tau}-(s/2-\tau)Y_{u/2-\tau}}{u/2-s/2}
  +4\frac{(Y_{u/2-\tau}-Y_{s/2-\tau})^2}{(u-s)^2}.
\end{equation*} Applying the inverse of transformation \eqref{Y2X}, after a tedious but elementary calculation, we arrive at  \eqref{EQ: q-Var}, up to a multiplicative constant.
 (Transformation \eqref{Y2X}  is just a different way of writing \eqref{T2X}. One can also apply formulas in \cite[Proposition 1.1]{Bryc:2009}.) We omit the details of this calculation.

To verify \eqref{Y-cond-LR} and \eqref{Y-cond-QV}, we
 use the orthogonal martingale polynomials for the process $(Y_t)$. The three step recursion for these polynomials is recalculated from
 the recursion for polynomials \eqref{p-mart}.
Recall that if monic orthogonal polynomials $\{p_n\}$ for (the law of) a random variable $T$ satisfy recursion
$$
xp_n(x)=p_{n+1}(x)+\beta_n p_n(x)+\gamma_n p_{n-1}(x),
$$
then the monic orthogonal polynomials $\{q_n\}$ for (the law of)  $Y=(T-\mu)/\sigma$
satisfy recursion
$$
x q_n(x)=q_{n+1}(x)+\frac{\beta_n-\mu}{\sigma}q_{n}(x)+\frac{\gamma_n}{\sigma^2}q_{n-1}(x).
$$

After a calculation, we verify that monic orthogonal martingale polynomials $q_n(x;t)$ for the process $(Y_t)_{t\geq -\tau}$ satisfy recursion
\begin{equation}
  \label{X-recursion}
  x  q_n(x;t)=q_{n+1}(x;t)+b_n(t)q_{n}(x;t)+c_n(t)q_{n-1}(x;t),\quad n\geq 0,
\end{equation}
with $q_{-1}(x;t)=0$, $q_0(x;t)=1$, where the coefficients
\begin{equation}\label{bc}
  b_n(t)=\alpha_n+\beta_n t, \quad c_n(t)=\gamma_n+\delta_n t
\end{equation} are given by
\begin{eqnarray}
    \alpha_n&=& \A \B+\A\C+\B\C+(2(\A+\B+\C)-1 )n    \label{alpha},\\
  \beta_n&=&2 (\C + n) \label{beta},
  \\
\gamma_n&=&n (\A+\B+n-1) (\A+\C+n-1) (\B+\C+n-1)  \label{gamma},\\
 \delta_n&=&2 n (\A+\C+n-1) (\B+\C+n-1) \label{delta}.
\end{eqnarray}

To determine the conditional moments we use the following  criterion.
\begin{lemma}
  \label{Lem:p2m}
  Fix $k=1,2,\dots$. Suppose that $\varphi(x,y)$ is a polynomial such that for all $n=0,1,\dots$ we have
  \begin{equation}
    \label{War-q}
    \E[ Y_t^k q_n(Y_u;u)|Y_s]=\E[\varphi(Y_s,Y_u) q_n(Y_u;u)|Y_s].
  \end{equation}
  Then
  $$\E[Y_t^k|Y_s,Y_u]=\varphi(Y_s,Y_u).$$
\end{lemma}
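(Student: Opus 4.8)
The plan is to exploit the fact that $(Y_t)$ is a Markov process whose orthogonal martingale polynomials $q_n(\cdot;u)$ form a linear basis for polynomials in one variable. First I would fix $s < u$ and observe that, by the Markov property, $\E[Y_t^k\mid Y_s, Y_u]$ is a measurable function of $(Y_s,Y_u)$, say $\psi(Y_s,Y_u)$; since $Y_t^k$ is square-integrable (the process is square-integrable and has all polynomial moments by Proposition \ref{P-Koekoek}, as the marginal laws are continuous dual Hahn orthogonality measures which are determined by moments and in fact have moments of all orders), $\psi$ is well-defined in $L^2$. The claim is that $\psi = \varphi$. By the tower property, condition \eqref{War-q} can be rewritten as
\begin{equation*}
  \E\big[(\psi(Y_s,Y_u) - \varphi(Y_s,Y_u))\, q_n(Y_u;u)\,\big|\, Y_s\big] = 0 \quad \text{for all } n \geq 0.
\end{equation*}
So it suffices to show that this family of identities (for all $n$) forces $\psi(Y_s,\cdot) = \varphi(Y_s,\cdot)$ as functions of $Y_u$, for $\mathfrak p_s$-almost every value of $Y_s$.

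The key step is then the following: for a fixed value $x$ of $Y_s$ with $x \in E_s$, the conditional law of $Y_u$ given $Y_s = x$ is $\mathfrak p_{s,u}(x,\d y)$ (up to the affine reparametrization relating $(Y_t)$ and $(\T_t)$), and the polynomials $q_n(\cdot;u)$ are, after the same affine change of variable, the monic orthogonal polynomials for the marginal law $\mathfrak p_u$, \emph{not} for the conditional law $\mathfrak p_{s,u}(x,\cdot)$. However, what we actually need is weaker: we need that the linear span of $\{q_n(\cdot;u): n \geq 0\}$ is dense enough in $L^2(\mathfrak p_{s,u}(x,\cdot))$ to separate the two square-integrable functions $\psi(x,\cdot)$ and $\varphi(x,\cdot)$. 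Since $\varphi$ is a polynomial and $\psi(x,\cdot)$ — being a conditional expectation of the polynomial $Y_t^k$ — also lies in the closure of polynomials, and since $\{q_n(\cdot;u)\}$ is a basis for the vector space of all polynomials, the difference $\psi(x,\cdot) - \varphi(x,\cdot)$ is orthogonal in $L^2(\mathfrak p_{s,u}(x,\cdot))$ to every polynomial. Because the measure $\mathfrak p_{s,u}(x,\cdot)$ is (by Lemma \ref{Lem:unique} and Proposition \ref{Prop-trans}) a probability measure determined by its moments, polynomials are dense in $L^2$ of that measure, hence $\psi(x,\cdot) = \varphi(x,\cdot)$ in $L^2(\mathfrak p_{s,u}(x,\cdot))$. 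Averaging over $x$ (over the set $E_s$, which carries full $\mathfrak p_s$-mass by Proposition \ref{P-Koekoek}) gives $\psi = \varphi$, i.e. $\E[Y_t^k \mid Y_s, Y_u] = \varphi(Y_s,Y_u)$.

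I expect the main obstacle to be the density-of-polynomials argument in $L^2$ of a measure that may have a discrete part and an absolutely continuous part simultaneously (the mixed-type regime that motivated the whole paper): one must be careful that $\mathfrak p_{s,u}(x,\cdot)$ being moment-determinate (which follows from Lemma \ref{Lem:unique}, since these are orthogonality measures of continuous dual Hahn polynomials) does indeed imply completeness of polynomials in $L^2$ of that measure — this is a standard but slightly delicate fact (moment-determinacy of a measure $\mu$ implies polynomials are dense in $L^2(\mu)$). A secondary technical point is the bookkeeping of the affine map between $(Y_t)$ and $(\T_t)$ and its effect on which polynomials are "the" orthogonal martingale polynomials; but this is purely a change of variable and introduces no real difficulty. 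Everything else — the tower property, the Markov reduction, and the recognition that $\varphi$ and the conditional expectation both lie in the polynomial closure — is routine.
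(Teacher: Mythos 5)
Your argument is correct, but it takes a different route from the paper's. The paper does not disintegrate over $Y_s=x$: it invokes Dynkin's $\pi$--$\lambda$ lemma to reduce the claim to the identity $\E[f(Y_s)\,Y_t^k\,g(Y_u)]=\E[f(Y_s)\,g(Y_u)\,\varphi(Y_s,Y_u)]$ for bounded measurable $f,g\geq 0$, observes that this holds when $g(Y_u)$ is replaced by any polynomial $p(Y_u)$ (by the hypothesis \eqref{War-q}), and then approximates $g(Y_u)$ by polynomials in $L^2$ of the \emph{marginal} law of $Y_u$ (dense there by determinacy, via Lemma \ref{Lem:unique} and \cite[Corollary 2.3.3]{Akhiezer}), controlling the error with Cauchy--Schwarz. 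You instead work conditionally on $Y_s=x$: you show that $\psi(x,\cdot)-\varphi(x,\cdot)$ is orthogonal to all polynomials in $L^2$ of the \emph{transition} law $\mathfrak p_{s,u}(x,\cdot)$ and conclude from determinacy of that kernel. Both arguments rest on the same two ingredients (the $q_n(\cdot;u)$ span all polynomials; determinacy implies $L^2$-density of polynomials), but applied to different measures. Your version needs determinacy of the transition kernels for $\mathfrak p_s$-a.e.\ $x$ --- which does hold here, again by Lemma \ref{Lem:unique}, and for $x\in E_s$ these are continuous dual Hahn orthogonality measures (with full mass on $E_s$ by Proposition \ref{P-Koekoek}) --- together with a regular conditional distribution and a countable union of null sets over $n$; in exchange you avoid the $\pi$--$\lambda$ reduction and the explicit Cauchy--Schwarz estimates. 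One small point worth making explicit if you write this up: the square-integrability $\psi(x,\cdot)\in L^2(\mathfrak p_{s,u}(x,\cdot))$ for a.e.\ $x$ follows from conditional Jensen, $\E[\psi^2(Y_s,Y_u)]\leq \E[Y_t^{2k}]<\infty$, using that all moments of the process are finite; your phrase about $\psi(x,\cdot)$ ``lying in the closure of polynomials'' is not needed and is best dropped in favor of this direct bound.
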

\begin{proof}
By Dynkin's $\pi-\la$ lemma, it is enough to show that for any pair of bounded measurable functions $f,g:\r\to[0,\infty)$ we have
\begin{equation}
  \label{condExp}
  \E[ f(Y_s) Y_t^k g(Y_u)]=\E[ f(Y_s) g(Y_u)\varphi(Y_s,Y_u)].
\end{equation}
By our assumption, \eqref{condExp} holds  if $g(Y_u)$ is replaced by a polynomial $p(Y_u)$, a linear combination of the polynomials $\{q_n(Y_u;u): n=0,1,\dots\}$.

By Lemma \ref{Lem:unique},   the law of  $Y_u$ is determined by moments.  Recall  that for a probability measure determined by moments,
 polynomials are dense in $L_2$,  \cite[Corollary 2.3.3]{Akhiezer}.
So if  $g$ is an arbitrary bounded measurable function, then
   for any $\eps>0$, there exists a polynomial $p$ such that
 $\E[|g(Y_u)-p(Y_u)|^2]<\eps^2$. Since \eqref{condExp}  holds for  the polynomial $p$, the difference between  the left hand side and the right hand side of \eqref{condExp} is  arbitrarily small. Indeed,
  \begin{multline*}
     \left|\E[ f(Y_s) Y_t^k g(Y_u)]- \E[ f(Y_s) g(Y_u)\varphi(Y_s,Y_u)]\right|
     \\ \leq
      |\E[ f(Y_s) Y_t^k (g(Y_u)- p(Y_u))]|+ |\E[ f(Y_s) \left(g(Y_u)-p(Y_u)\right)\varphi(Y_s,Y_u)]|.
  \end{multline*}
  By the Cauchy-Schwarz inequality, the first term can be bounded by
$$
   | (\E[f^2(Y_s) Y_t^{2k}])^{1/2}(\E[|g(Y_u)-p(Y_u)|^2])^{1/2}
    \leq (\E[f^2(Y_s) Y_t^{2k}])^{1/2}\eps,
    $$
 and similarly the second term is at most
 $$(\E[f^2(Y_s)\varphi^2(Y_s,Y_u)])^{1/2}(E[|g(Y_u)-p(Y_u)|^2])^{1/2}
  \leq (\E[f^2(Y_s)\varphi^2(Y_s,Y_u)])^{1/2}\eps.
 $$ %
Since $\eps>0$ is arbitrary,  \eqref{condExp} follows.
\end{proof}

For the subsequent calculations, we write recursion \eqref{X-recursion} in the  vector form as
\begin{equation}
  \label{vect-J}
  x  \vec{q}(x;t)= \vec q (x;t)\mJ(t),
\end{equation}where $\vec q(x;t)=[q_0(x;t),q_1(x;t),\dots]$ and $\mJ(t)$ is the Jacobi matrix
$$
\mJ(t)=
\begin{bmatrix}
 b_0(t) & c_1(t) & 0 & 0  & \dots & 0 & \dots
   \\
 1 & b_1(t) & c_2(t) & 0 &  \dots &0
   &  \dots \\
 0 & 1 & b_2(t) & c_3(t) & \ddots &\vdots
   &  \\
 0 & 0 & 1 & b_3(t) & \ddots & 0
   &  \dots \\
\vdots & \vdots & \ddots & \ddots & \ddots & c_n(t)
   & \ddots  \\
 0 & 0 &  & 0 & 1 & b_n(t) &\ddots &
    \\
 \vdots & \vdots &   &   &   \ddots & \ddots &\ddots
\end{bmatrix}
$$
with the diagonal entries given by \eqref{bc}. From \eqref{bc} we see that the Jacobi matrix depends linearly on $t$,
\begin{equation}
  \label{J} \mJ(t)=\mY+t\mX.
\end{equation}
Ref. \cite{Bryc-Matysiak-Wesolowski-04} indicates that linearity of regression \eqref{Y-cond-LR} (the so called harness property) is related to   the fact that $\mJ$ depends linearly  on $t$,
 and that quadratic conditional variance \eqref{Y-cond-QV}  is related to a quadratic relation between the matrices $\mX, \mY$.
 However,   process $(Y_t)$ here  has a different covariance, and the emphasis in \cite{Bryc-Matysiak-Wesolowski-04} was on  the  converse implication, so we shall work out the formulas that are pertinent to our case anew.

\begin{proof}[Proof of \eqref{Y-cond-LR}] We use Lemma \ref{Lem:p2m} with $k=1$ and $\varphi(x,y)=\frac{u-t}{u-s}x+\frac{t-s}{u-s}y$. To verify assumption \eqref{War-q}, we write it in vector form as
 \begin{equation}
   \label{LR-q}
   \E [Y_t \vec q (Y_u;u)|Y_s]=\E \left[\left(\tfrac{u-t}{u-s}Y_s+\tfrac{t-s}{u-s}Y_u\right) \vec q (Y_u;u)\middle|Y_s\right].
 \end{equation}
 To verify \eqref{LR-q}, we use the vector form of the martingale property, component-wise.
That is, we write martingale identity $$
\E [Y_t q_n(Y_u;u)|Y_s]=\E \left[Y_t \E[q_n(Y_u;u)|Y_t]\middle |Y_s\right] =\E [Y_t q_n(Y_t;t)|Y_s]
$$
  in the vector form, and combine it with the vector form \eqref{vect-J}  of the three step recursion. We get
  \begin{equation}
  \E [Y_t \vec q (Y_u;u)|Y_s]=\E [Y_t \vec q(Y_t;t)|Y_s] =\E[\vec q(Y_t;t)\mJ(t)|Y_s]= \E[\vec q(Y_t;t)|Y_s]\mJ(t)= \vec q(Y_s;s)\mJ(t).
\end{equation}
Similarly, we have
 $$\E [Y_u \vec q (Y_u;u)|Y_s]=\vec q(Y_s;s)\mJ(u).$$
Since $Y_s\vec q(Y_s;s)=\vec q(Y_s;s)\mJ(s)$,
we see that formula \eqref{LR-q},   is a consequence of a simple algebraic identity
\begin{equation}
  \label{LR-J} \mJ(t)=\frac{u-t}{u-s}\mJ(s)+\frac{t-s}{u-s}\mJ(u),
\end{equation}
left-multiplied by $\vec q(Y_s;s)$. Identity  \eqref{LR-J} holds as $\mJ(t)=t\mX+\mY$ is linear in variable $t$.
Hence, by Lemma \ref{Lem:p2m}
formula \eqref{Y-cond-LR} holds.
\end{proof}
\begin{proof}[Proof of \eqref{Y-cond-QV}]
   This proof is based on a similar plan: we shall deduce \eqref{Y-cond-QV}  from  Lemma \ref{Lem:p2m} with $k=2$  using an algebraic identity
   \begin{equation}
  \label{q-comm}
  \mX \mY-\mY\mX= \frac12\mX^2+ 2 \mY
\end{equation}
for the two components of the Jacobi matrix \eqref{J},
compare \cite[formula (1.1)]{Bryc-Matysiak-Wesolowski-04}. The arguments rely on several cumbersome calculations which we shall only indicate. (We used a computer algebra system to complete several calculations, with  \eqref{q-comm} verified by representing infinite matrices $\mX, \mY$ as  \eqref{D2X} and \eqref{D2Y}.)%

The first step is   to  rewrite formula \eqref{Y-cond-QV} in expanded form. A calculation shows that \eqref{Y-cond-QV} is equivalent to the following
\begin{multline}\label{QV-sub}
\E[Y_t^2|Y_s,Y_u] = \frac{(1+2 u-2 t)(u-t) }{(1+2 u-2 s) (u-s)}Y_s^2+\frac{(1+2 t-2 s) (t-s)}{(1+2 u-2 s) (u-s)}Y_u^2\\+\frac{4 (t-s) (u-t)}{(1+2 u-2 s) (u-s)} Y_sY_u
+\frac{4 u (t-s) (u-t)}{(1+2 u-2 s) (u-s)}Y_s-\frac{4 s (t-s) (u-t)}{(1+2 u-2 s) (u-s)} Y_u.
\end{multline}
So   assumption   \eqref{War-q} in  vector form is:
\begin{multline}\label{QV-sub+}
\E[Y_t^2\vec q(Y_u;u)|Y_s] = \frac{(1+2 u-2 t)(u-t) }{(1+2 u-2 s) (u-s)}Y_s^2\vec q(Y_s;s)
\\+\frac{(1+2 t-2 s) (t-s)}{(1+2 u-2 s) (u-s)}
\E[Y_u^2\vec q(Y_u;u)|Y_s]+\frac{4 (t-s) (u-t)}{(1+2 u-2 s) (u-s)} Y_s\E[Y_u\vec q(Y_u;u)|Y_s]\\
+\frac{4 u (t-s) (u-t)}{(1+2 u-2 s) (u-s)}Y_s\vec q(Y_s;s)-\frac{4 s (t-s) (u-t)}{(1+2 u-2 s) (u-s)} \E[Y_u\vec q(Y_u;u)|Y_s].
\end{multline}
Next, we note that by the martingale property for the sequence of polynomials $\vec q(x;t)$, we have
$$\E[Y_t^2 \vec q(Y_u;u)|Y_s] = \E\left[Y_t^2 \E[\vec q(Y_u;u)|Y_t]\middle|Y_s\right] = \E[Y_t^2  \vec q(Y_t;t)|Y_s]=  \vec q(Y_s;s)\mJ^2(t).
$$
Similar calculations apply to each of the terms  on the right hand side of \eqref{QV-sub+}.
So to deduce \eqref{QV-sub}, and hence \eqref{Y-cond-QV} from Lemma \ref{Lem:p2m}, it is enough to show that the Jacobi matrices satisfy identity
\begin{multline*}
   \mJ^2(t) =
    \frac{ (1+2u-2 t)(u-t)}{(1+2 u-2 s) (u-s)}\mJ^2(s)+\frac{(1+2 t-2 s) (t-s)}{(1+2 u-2 s) (u-s)}\mJ^2(u)+\frac{4 (t-s) (u-t)}{(1+2 u-2 s) (u-s)} \mJ(s) \mJ(u)
\\+\frac{4 u (t-s) (u-t)}{(1+2 u-2 s) (u-s)}\mJ(s)-\frac{4 s (t-s) (u-t)}{(1+2 u-2 s) (u-s)} \mJ(u)
\end{multline*}
for all $s<t<u$.
After substituting \eqref{J} into this expression, a  lengthy calculation shows that for $s<t<u$, the identity is equivalent to \eqref{q-comm}. (For a similar result of this type, see \cite[Proposition 4.9]{Bryc-Matysiak-Wesolowski-04}.)

The final step is to prove that \eqref{q-comm} holds. Here, we use the explicit form of the matrices $\mX,\mY$.
Matrix $\mX$ is bi-diagonal, with the sequence $(\beta_0,\beta_1,\dots)$ on the main diagonal, and $(\delta_1,\delta_2,\dots)$ above the main diagonal.
Matrix $\mY$ is tri-diagonal with $1$'s below the main diagonal, $(\alpha_0,\alpha_1,\dots)$ on the main diagonal, and $(\gamma_1,\gamma_2,\dots)$ above the main diagonal.
Relation \eqref{q-comm} becomes a system of recursions for these coefficients. Using the explicit formulas (\ref{alpha}-\ref{delta}),  another lengthy calculation   confirms that \eqref{q-comm} indeed holds.
This ends the proof of \eqref{Y-cond-QV}.
\end{proof}
\begin{remark}\label{Rem5.5}
   A somewhat more conceptual approach  to  \eqref{q-comm} is described  in \cite[Section 4.4]{Bryc-Matysiak-Wesolowski-04}. In this approach, one represents Jacobi matrix as  a matrix of an operator on polynomials in variable $z$ in the basis of monomials. The three step recursion \eqref{X-recursion} is then encoded by $\mJ_t=t\mX+\mY$ with
 \begin{equation}
   \label{D2X}
    \mX=2(\C+z\partial_z)+2(\A+\C+z\partial_z)(\B+\C+z\partial_z)\partial z,
 \end{equation}
    \begin{equation}
   \label{D2Y}
   \mY=z+(\A \B+\A\C+\B\C)+(2(\A+\B+\C)-1)z\partial_z+(\A+\B+z\partial_z)(\A+\C+z\partial z)(\B+\C+z\partial z)\partial_z,
 \end{equation}
where $z^n$ represents the $n$-th orthogonal polynomial.
   In principle, verification of \eqref{q-comm} in this form is just  a long calculation which uses  the product rule
   $\partial_z (z f(z))=f(z)+z f'(z)$ to swap the order of operators $\partial_z z= 1+z\partial_z$ multiple times. The plan here is to rewrite all mixed products of operators on both sides of the identity \eqref{q-comm} in normal (Wick) order, i.e., to express both sides as  linear combinations of the operators $z^m \partial_z^k$, $m,k\geq0$, and then compare the coefficients. Instead, we used a computer algebra system to verify \eqref{q-comm} by this technique.
\arxiv{
 \subsection*{Mathematica code for Remark \ref{Rem5.5}}\label{MathCd}
A reader who have attempted a direct verification of \eqref{q-comm} as described in our sketch of proof might appreciate  an implementation in a symbolic computer algebra system. (Some longer outputs are suppressed.)

Define the operators:

\noindent\(\pmb{\text{XX}[\text{z$\_$}]=2(C (\#)+z D[\#,z])+2 (A+C)(B+C)D[\#,z]+2(A+B+2C) z D[\#,\{z,2\}] +2z D[z D[\#,\{z,2\}],z]\&}\)

\begin{multline*}
   \pmb{\text{YY}[\text{z$\_$}]=(z+A B+A C+B C)\#+(2(A+B+C)-1) z D[\#,z]}+\\ \pmb{2 z D[z D[\#,z],z] +z D[z D[z D[\#,\{z,2\}],z],z]+ (A+B)(A+C)(B+C)D[\#,z] +}\\\pmb{((A+B)(A+C)+(A+B)(B+C)+(A+C)(B+C))z D[\#,\{z,2\}]+} \\
\pmb{((A+B)+(A+C)+(B+C))z D[z D[\#,\{z,2\}],z]\&}
\end{multline*}

Confirm  match with \eqref{J} and \eqref{X-recursion}, where  $\A=A$, $\B=B$, $\C=C$:

\begin{doublespace}
\noindent\(\pmb{\text{Collect}[\text{YY}[z][ z{}^{\wedge}n]+ t*\text{XX}[z][ z{}^{\wedge}n],\{t,z\},\text{FullSimplify}]}\)
\end{doublespace}

\noindent\(n (-1+A+B+n) (-1+A+C+n) (-1+B+C+n) z^{-1+n}+(-n+2 n (C+n)+B (C+2 n)+A (B+C+2 n)) z^n+z^{1+n}+t \left(2 n (-1+A+C+n) (-1+B+C+n) z^{-1+n}+2
(C+n) z^n\right)\)

\medskip
Define the left hand side LHS   and the right hand side RHS of \eqref{q-comm}, acting on $f(z)$:

\begin{doublespace}
\noindent\(\pmb{\text{LHS}=\text{XX}[z][\text{YY}[z][f[z]]]-\text{YY}[z][\text{XX}[z][f[z]]]\text{//}\text{FullSimplify}}\\
\pmb{\text{RHS}=1/2 \text{XX}[z][\text{XX}[z][f[z]]]+2 \text{YY}[z][f[z]]\text{//}\text{FullSimplify}}\)
\end{doublespace}

Prove \eqref{q-comm}:

\begin{doublespace}
\noindent\(\pmb{\text{LHS}-\text{RHS}\text{//}\text{FullSimplify}}\)
\end{doublespace}

\begin{doublespace}
\noindent\(0\)
\end{doublespace}
}
\end{remark}
 \subsection{ Proof of Theorem \ref{T-char}(i)$\Rightarrow$(ii)}
Process  $(X_t)$ is defined on $[0,\infty)$ and  satisfies (\ref{EQ: cov}-\ref{EQ: q-Var}), so    by \cite[Theorem 2.5]{Bryc-Matysiak-Wesolowski-04} it has finite moments of all order.
Then \cite[Theorem 4.1]{Bryc-Matysiak-Wesolowski-04} implies that process $(X_t)$ has orthogonal martingale polynomials $\{p_n(x;t)\}$, and their three step recursion is determined uniquely by the coefficients in
 \eqref{EQ: cov}, \eqref{EQ: LR} and \eqref{EQ: q-Var}. However, by the first part of the theorem, the same holds for the process $\widetilde X$ obtained by transformation \eqref{T2X} of the
 continuous dual Hahn process. So both processes have the same orthogonal martingale polynomials. It remains to show that processes $X$ and $\widetilde X$ have the same finite dimensional distributions.

For any $t>0$, both processes have the same orthogonal polynomials and hence the same moments
$$\E\left[X_{t}^{n}\right]=\E\left[\widetilde X_{t}^{n}\right].$$
In view of Lemma \ref{Lem:unique}, this means that both processes have the same univariate laws.
In addition, by martingale property of the orthogonal martingale polynomials, for every $n$, one can find a polynomial $\varphi_n$ such that
$$\E\left[X_{t}^{n}| \mathcal{F}_s\right]=\varphi_n(X_s) \mbox { and } \E\left[\widetilde X_{t}^{n}\middle| \widetilde{\mathcal{F}}_s\right]=\varphi_n(\widetilde X_s) .$$
(Here $\mathcal{F}_s$ and $\widetilde{\mathcal{F}}_s$ are the past $\sigma$-fields.)
By Lemma \ref{Lem:unique}, this proves that conditional laws are the same, at every point of the support of $X_s$. Since  $\widetilde X$  is a Markov process, process $X$ is also Markov, with the same transition probabilities. So the finite dimensional distributions for both processes are the same.
 \qed

\subsection*{Acknowledgement} The author thanks Wojciech Matysiak for helpful comments that improved an early draft of this paper and for Remark \ref{Rem:Wojtek},  and to Jacek Weso{\l}owski for the discussions that lead to the Markov processes described below Definition \ref{Def 2}.
This research was partially supported by Simons Foundation/SFARI Award Number: 703475.

\end{document}